\def\d{\mathrm{d}}
\def\I{\mathbf{I}}
\newcommand{\R}{\mathbb{R}}
\newcommand{\N}{\mathbb{N}}
\newcommand{\p}{\mathbb{P}}
\newcommand{\E}{\mathbb{E}}
\renewcommand{\(}{\left(}
\renewcommand{\)}{\right)}
\renewcommand{\[}{\left[}
\renewcommand{\]}{\right]}
\newtheorem{theorem}{Theorem}[section]
\newtheorem{corollary}{Corollary}[section]
\newtheorem{definition}{Definition}[section]
\newtheorem{lemma}{Lemma}[section]
\newtheorem{remark}{Remark}[section]
\newcommand{\VaR}{\mathrm{VaR}}
\newcommand{\RV}{\mathcal{RV}}
\newenvironment{proof}[1][Proof]{\noindent \textbf{#1.} }{\  \rule{0.5em}{0.5em}}
\newtheorem{proposition}{Proposition}[section]
\def\nn{\nonumber}
\title{Second-order Asymptotic Analysis of Tail Probabilities of Bidimensional Randomly Weighted Sums
}
\author{Bingzhen Geng\thanks{\scriptsize  School of Big Data and Statistics, Anhui University, Hefei, Anhui, 230601, China. Email: \texttt{gengbz@ahu.edu.cn}}
	\and
	Yang Liu\thanks{\scriptsize  
    School of Science and Engineering, The Chinese University of Hong Kong (Shenzhen), Shenzhen, Guangdong, 518172, China. 
    Email: \texttt{yangliu16@cuhk.edu.cn}}
 \and
    Shijie Wang\thanks{\scriptsize Corresponding Author. School of Big Data and Statistics, Anhui University, Hefei, Anhui, 230601, China. Email: \texttt{wangshijie@ahu.edu.cn}}
}
\date{}
\begin{document}

\maketitle
\begin{abstract}
Motivated by a bidimensional discrete-time risk model in insurance, we study second-order asymptotics for two kinds of tail probabilities of the stochastic discounted value of aggregate net losses including two business lines. These are essentially modeled as randomly weighted sums $S_n^{\xi}=\sum_{i=1}^n\xi_iX_i$ and $T_m^\eta=\sum_{j=1}^m\eta_{j}Y_{j}$ for any fixed $n,m\in\N$, in which it is assumed that the primary random variables $\left\{\(X,Y\),\(X_i,Y_i\):i\in \N\right\}$ form a sequence of real-valued, independent and identically distributed random pairs following a common bivariate Farlie-Gumbel-Morgenstern distribution and the random weights $\left\{\xi_i,\eta_i:i\in \N\right\}$ are bounded, nonnegative and arbitrarily dependent, but independent of the primary random variables. Under the assumption that two marginal distributions of the primary random variables are second-order subexponential, we first obtain the second-order asymptotic formulas for the joint and sum tail probabilities, which generalize and strengthen some known ones in the literature. Furthermore, by directly applying the obtained results to the above bidimensional risk model, we establish second-order asymptotic formulas for the corresponding tail probabilities. Compared with the first-order ones, our numerical simulation shows that the second-order asymptotics are much more precise. 

\quad 

\noindent \textbf{Keywords}: second-order asymptotics; joint tail probability; sum tail probability; second-order subexponential distribution; Farlie-Gumbel-Morgenstern distribution; bidimensional discrete-time risk model.
\end{abstract}

\section{Introduction}
The tail behavior of a loss variable or distribution, often interpreted as the ruin probability, 
is one of the central topics in applied probability, especially in extreme value theory. In the past two decades, the one-dimensional discrete-time financial and insurance risk model has been investigated by many papers; see \cite{tang2003precise,tang2004ruin,su2006behavior} and so on. However, operating with a single line of business may not be a practical model for large insurers, which drives the development of models on surplus processes for multiple lines; see, e.g., \cite{geng2025asymptotics}. 

In the field of extreme value theory, the randomly weighted sum is an important research topic, and the asymptotic tail behavior of a single randomly weighted sum (later $S_n^\xi$) has been extensively studied in the presence of primary heavy-tailed random variables (rvs); see \cite{tang2003randomly,zhang2009approximation,tang2014randomly,cheng2018randomly,geng2019tail} and so on. 
Our paper contributes to the second-order asymptotic theory of the joint tail behavior of two randomly weighted sums in a general model:
\begin{align}\label{eq:sum}
S_n^{\xi}=\sum_{i=1}^n\xi_iX_i~~~~\mbox{and}~~~~T_m^{\eta}=\sum_{j=1}^m\eta_jY_j
\end{align}
for any fixed $n,m\in\N$. Here, 
the random variable $X_i$ is generally interpreted as the net loss in period $i$, and the random weight $\xi_i$ is called the accumulated discount factor up to time $i$. Then $S_n^\xi$ describes the present value of the total risk in the first $n$ periods. Parallel to the first random tuple $(X_i, \xi_i, S_n^\xi)$, we denote the second random tuple by $(Y_j, \eta_j, T_m^\eta)$, which can be interpreted as the second business line of a large insurer later. Specifically, we are interested in the second-order asymptotic expansions for the tail probabilities of 
$$
\p\(S_n^{\xi}>x,T_m^{\eta}>y\)~~~~\mbox{and}~~~~ \p\(S_n^{\xi}+T_m^{\eta}>x+y\)
$$ 
as $(x,y)\rightarrow(\infty,\infty)$, which are called the joint tail probability and the sum tail probability in this paper, respectively.

We first highlight an important justification for studying the joint tail probability of the pair
$(S_n^{\xi}, T_m^{\eta})$ in generality, with possibly different time horizons $n$ and $m$.
In the study of systemic risk, a central quantity of interest is the conditional tail probability
$$
\p\!\left(S_n^{\xi} > x \,\middle|\, T_m^{\eta} > y\right)
  = \frac{\p\!\left(S_n^{\xi} > x,\, T_m^{\eta} > y\right)}{\p\!\left(T_m^{\eta} > y\right)},
$$
which measures the likelihood that one financial entity (or business line) suffers an extreme loss, given that another entity has already experienced a tail event. Such conditional probabilities have natural interpretations in
systemic risk: for instance, $\p(S_n^{\xi} > x \mid T_m^{\eta} > y)$ quantifies the contagion risk or spillover probability from one institution to another, and serves as a building block for systemic risk measures such as CoVaR and MES (Marginal Expected Shortfall); see, e.g., 
\cite{man2024tail} and \cite{geng2025asymptotics,geng2024value}. 
Note that $n$ and $m$ in this conditional probability can naturally differ,
as the two entities may operate over different time horizons or reporting periods.
The key to computing (and to obtaining second-order asymptotics for) such a
conditional probability is precisely the joint tail probability
$\p(S_n^{\xi} > x,\, T_m^{\eta} > y)$ studied in the present paper. Therefore, the second-order asymptotic results for~\eqref{eq:sum} established
here have direct implications for the precision of conditional tail probability estimates in systemic risk measurement.

Next, as a motivating application of Eq. \eqref{eq:sum}, we introduce a concrete and widely-recognized bidimensional discrete-time risk model \eqref{eq:U} in insurance. 
Within the period $i = 1, \cdots, n$, we denote by the real-valued primary random vectors $\(X_i,Y_i\)$ the net losses of the insurer (the total amount of the claims minus the total premiums) from two different business lines; the positive random weight vectors $(R_i,\widetilde{R}_i)$  denote the stochastic discount factors during period $i-1$ to period $i$. The random vectors $\{(X_i,Y_i):i\in\N\}$ represent insurance
risks, and $\{(R_i,\widetilde{R}_i):i\in\N\}$ represent financial risks (\cite{nyrhinen1999ruin,nyrhinen2001finite}). In this setup, the bidimensional stochastic discounted value of the aggregate net losses up to
time $n$ can be characterized by 
\begin{align}\label{eq:U}
\left\{
\begin{aligned}
D_1(n)&=\sum_{i=1}^nX_i\prod_{k=1}^iR_k,\\
D_2(n)&=\sum_{i=1}^nY_i\prod_{k=1}^i\widetilde{R}_k.
\end{aligned}
\right.
\end{align}
We are
 concerned with the joint tail probability and the sum tail probability of such a bidimensional discrete-time risk model, defined, respectively, as
\begin{align}\label{eq:D}
\left\{
\begin{aligned}
D_{\text{and}}(x,y;n)&=\p\(D_1(n)>x, D_2(n)>y\),\\
D_{\text{sum}}(x,y;n)&=\p\(D_1(n)+D_2(n)>x+y\),
\end{aligned}
\right.
\end{align}
where $x>0$ and $y>0$ are the initial surpluses of the two business lines, respectively. 
If we denote the
aggregate stochastic discount factors by
$\xi_i=\prod_{k=1}^iR_k$ 
and $\eta_i =\prod_{k=1}^i\widetilde{R}_k$ 
during the first $i$ periods and $m=n$, then Eq. \eqref{eq:U} becomes a special case of Eq. \eqref{eq:sum}.

We note that in the above definition, 
$D_{\text{and}}(x,y;n)$ represents the probability that the aggregate discounted net losses exceed the initial surpluses in two business lines at the same time $n$, while $D_{\text{sum}}(x,y;n)$ represents the probability that the total aggregate discounted net losses exceed the total initial surplus up to time $n$. Especially, if we consider nonnegative random vectors $\{\(X_i,Y_i\):i\in\N\}$, then $D_{\text{and}}(x,y;n)$ reflects that the ruin occurs in both business lines over time $n$ and $D_{\text{sum}}(x,y;n)$ shows the probability of the total ruin.  
For detailed discussions, we refer readers to \cite{qu2013approximations,sun2014finite,tang2016random,chen2017interplay} and so on.

Indeed, in today's insurance industry, an insurance company often operates multiple different business lines simultaneously. The insights gained from studying these models in a two-variable scenario can be valuable in understanding and addressing these challenges. 
Therefore, \cite{li2018joint} considered the joint tail of \eqref{eq:sum}, in which each pair of $(X_i,Y_i)$ is \emph{strongly asymptotically independent} and dominatedly-varying-tailed. They obtained first-order asymptotic expansions for the joint tail of \eqref{eq:sum} with fixed positive integers $n,m\in\N$. Furthermore, when the marginal distributions of $(X_i,Y_i)$ are restricted to the regularly-varying class (see Definition \ref{de:3}), it was shown that the corresponding first-order asymptotic formula holds uniformly for any $n,m\in\N\cup\{\infty\}$.  More recently, \cite{yang2024asymptotics} further studied the first-order {\color{blue}asymptotics} for the joint tail of \eqref{eq:sum} for some fixed  $n,m\in\N$ by allowing pairs of $(X_i,Y_i)$ to be \emph{pairwise quasi-asymptotic independent} and the marginal distributions of each pair of $(X_i,Y_i)$ belonging to the intersection of long-tailed and dominatedly-varying-tailed class. Then, the uniformly asymptotic results were also investigated in the presence of consistently-varying-tailed primary rvs.
 
In alignment with the bivariate structure of Eq. \eqref{eq:sum}, we highlight its significance through two additional application scenarios in quantitative risk management.  
First, in the field of actuarial science, \eqref{eq:sum} is particularly relevant to Asset-Liability Management (ALM). An insurer may invest in multiple risky assets. If $S_n^{\xi}$ represents the discounted value of future insurance net losses (liabilities) and $T_m^{\eta}$ represents the fluctuating value of $m$ investment assets, the two-dimensional framework allows for the assessment of the joint tail risk. In terms of $T_m^{\eta}$, in static portfolio optimization, a portfolio contains $m$ risky assets within a single period (see, e.g., \cite{chen2022ordering,blanchet2025convolution}). For $j = 1, \dots, m$, the $j$-th asset suffers a loss $Y_j$ at the end of the period and is discounted by a stochastic factor $\eta_j$ (see \cite{bjork2009arbitrage}). Therefore, the sum of these losses $T_m^\eta$, weighted by their respective discount factors, represents the total discounted potential losses of the investment portfolio. Since assets and liabilities are often subject to different but dependent random weights (such as inflation and interest rates), analyzing their joint tail probability is essential for determining the solvency margin of an insurer (see, e.g., Chapter 9 of \cite{mcneil2015quantitative}).  
Second, in the context of quantitative economics, \eqref{eq:sum} can be applied to model systemic risk between two interconnected financial institutions (e.g., banks). Here, the two sums represent the total exposure of two banks to a series of common macroeconomic shocks. The random weights $\xi_i$ and $\eta_j$ reflect the varying sensitivity of each institution’s portfolio to these shocks over time. By investigating the joint tail probabilities of \eqref{eq:sum}, regulators can better understand the likelihood of simultaneous defaults and the potential for financial contagion (see, e.g., \cite{man2024tail}).

Though the literature mentioned above provides some valuable insights into asymptotic tail behaviors of randomly weighted sums, it is important to note that the approximations discussed therein are mostly first-order asymptotics. 
The demand for more precision in risk assessments arises from the shared aspirations of insurers and regulators. With greater accuracy, they can make more informed decisions to protect against potential risks. To the best of our knowledge, up to now, the main research on second-order expansions for the tail probability of a single randomly weighted sum $S_n^\xi$ has mainly concentrated on two types of second-order assumptions including the second-order subexponentiality (denoted by $\mathscr{S}_2$; see \cite{albrecher2010higher,lin2012seconda,lin2014second,geng2025second}) and the second-order regular
variation (denoted by $2\RV$; see \cite{degen2010risk,zhu2012tail,mao2015second,geng2024value}). 
However, most of the existing results are based on the assumption that {$\{X_i: i\in\N\}$ is a sequence of independent and identically distributed (iid) rvs and $\{X_i: i\in\N\}$ and $\{\xi_i: i\in\N\}$ are independent of each other, while this assumption of complete independence does not align with the insurance practice.}

In this present work, 
it is assumed that the primary rvs $ \{(X,Y),(X_i,Y_i): i\in \N\}$ form a sequence of real-valued iid random pairs following a common bivariate Farlie-Gumbel-Morgenstern (FGM) distribution. The FGM distribution was originally introduced by \cite{morgenstern1956einfache} and was subsequently investigated by
 \cite{gumbel1960bivariate}.
Recall that a bivariate FGM distribution function is of the form
 \begin{align}\label{eq:FGM}
 \Pi(x,y) &= F(x)G(y)\(1 +r\overline{F}(x)\overline{G}(y)\)\nn\\
&=(1+r)F(x)G(y)-rF^2(x)G(y)-rF(x)G^2(y)+rF^2(x)G^2(y),
 \end{align}
 where $F$ and $G$ are marginal distribution functions and $r$ is a real number that fulfills $|r|\leq1$ for $\Pi(\cdot,\cdot)$ to be a proper bivariate distribution function. One can see that $F^2$ and $G^2$ are also proper distributions. Clearly, the FGM distribution is flexible to model both positive and negative dependences (but not the extreme dependence). Trivially, if $r = 0$, then \eqref{eq:FGM} describes a joint distribution function of two independent rvs. We refer readers to \cite{kotz2004continuous} for a general review of the FGM distribution.

It is worth mentioning that the FGM dependence structure has wide applications in risk management and extreme value theory since it has a very nice and tractable presentation. For example, \cite{chen2011finite} and \cite{chen2015ruin} considered a discrete-time insurance risk model, in which it was assumed that the insurance risk and the financial risk follow an FGM distribution with parameters $r\in(-1,1]$ and $r=-1$, respectively; also see \cite{chen2014ruin}. In addition, \cite{yang2014asymptotic} investigated a bidimensional continuous-time risk model in which the two types of claim risk from two different lines of insurance business are assumed to follow an FGM distribution with $r\in(-1,1]$. \cite{mao2015risk} studied the second-order asymptotics of risk concentrations based on expectiles as well as $\VaR$ assuming that the loss variables fulfill a two- or high-dimensional generalized FGM copula. \cite{yang2022secondb} established a second-order expansion for the tail probability of the stochastic discounted value of the aggregate net losses under the assumption that the insurance risk and the financial risk follow an FGM distribution with parameter $r\in(-1,1)$.  

Our main contributions are as follows. First, under some mild technical conditions, we construct the second-order asymptotics of the joint tail behavior of randomly weighted sums with second-order subexponential distributions and the FGM dependence with parameter $r\in[-1,1]$. Compared with the literature discussed in the previous paragraph, the key reason that we are able to handle the entire interval $[-1,1]$ is that we carry out a second-order expansion. Second, we propose the second-order asymptotics of the sum tail behavior of randomly weighted sums under the above setting. Third, we further obtain second-order asymptotics of the joint tail probability and the sum tail probability for the stochastic discounted value of aggregate net losses in a bidimensional discrete-time risk model as an actuarial application. Finally, compared with the first-order asymptotics, some numerical results are presented to illustrate the accuracy of our second-order ones. The methods used in this paper can be applied to handle multidimensional and other types of risk models, but to avoid overly complicated settings, they are not covered in this paper but discussed in the conclusion. Note that the general model~\eqref{eq:sum} allows $n$ and $m$ to be arbitrary positive integers, while the specific actuarial application~\eqref{eq:U} specializes to $n = m$ for the natural reason that both business lines operate over the same time horizon $n$. The main theoretical results, Theorems~\ref{the:1} and~\ref{the:2}, are established for general $n, m \in \mathbb{N}$, and the risk model~\eqref{eq:U} serves as one concrete application.

The rest of this paper is organized as follows. Section \ref{sec:2} introduces some preliminaries on second-order subexponential distributions and presents two main theoretical results afterwards. 
All the proofs of the results obtained are shown in Section \ref{sec:appendix}. Section \ref{sec:3} proposes a bidimensional discrete-time risk model as an application and gives numerical simulations to illustrate the accuracy of our results in comparison with the first-order asymptotics. Section \ref{sec:conc} concludes the paper. 

\setcounter{equation}{0}
\section{Preliminaries and main results}\label{sec:2}
Throughout this paper, for any distribution function $F$, we denote its tail by $\overline F(x)=1-F(x)$. For two real numbers $a$ and $b$, write $a\vee b=\max\{a,b\}$ and $a\wedge b=\min\{a,b\}$.  All limit relationships are based on $x\rightarrow\infty$ or {$(x, y)\rightarrow (\infty,\infty)$} unless stated otherwise.  For two positive functions $f(\cdot)$ and $g(\cdot)$, write $f(x)=O(g(x))$ if $\limsup f(x)/g(x)<\infty$; write $f(x)\asymp g(x)$ {(we say that $f(x)$ is weakly equivalent to $g(x)$)} if both $f(x)=O(g(x))$ and $g(x)=O(f(x))$; write $f(x)=o(g(x))$ if $\lim f(x)/g(x)=0$; write $f(x)\lesssim g(x)$ if $\limsup{f(x)}/{g(x)}\leq 1$; write $f(x)\gtrsim g(x)$ if $\liminf{f(x)}/{g(x)}\geq 1$; write $f(x)\sim g(x)$ if $\lim{f(x)}/{g(x)}=1$. Furthermore, for two positive trivariate functions $f(\cdot,\cdot,\cdot)$ and $g(\cdot,\cdot,\cdot)$, we say that the asymptotic relation $f(x,y,t)\sim g(x,y,t)$ holds uniformly over all $t$ in a nonempty set $\Lambda$ if $$ \lim_{ {(x, y)\rightarrow (\infty,\infty)}}\sup_{t\in\Lambda} \Big|\frac{f(x,y,t)}{g(x,y,t)}-1\Big|=0.$$ By convention, for any rv $X$, write $X^+=\max\{X,0\}$. The indicator function of an event $A$ is denoted by $\mathbf{I}_A$.

\subsection{ Second-order subexponential distribution}
This paper focuses on second-order subexponential distributions that were first proposed by \cite{lin2012secondb} and were studied by many scholars subsequently. For convenience in our presentation, let us recall some related definitions and notation. Assume that $\overline F(x)>0 $ holds for all $x > 0$. For $t>0$, write $\Delta(t):=(0,t]$,
\begin{eqnarray*}
x+\Delta(t):=(x,x+t],
\end{eqnarray*}
and
\begin{eqnarray*}
F(x+\Delta(t)):=F(x+t)-F(x)~(\mbox{also~denoted~by}~F(x,x+t] ).
\end{eqnarray*}
 It is known that a distribution $F$ on $[0,+\infty)$ is classified as a distribution of the subexponential class, denoted by $\mathscr{S}$, if
\begin{eqnarray*}
{\overline {F^{2 * }}}(x) \sim 2\overline F(x),
\end{eqnarray*}
where $F^{2*}$ denotes the $2$-fold convolution of the distribution $F$ with itself. The class $\mathscr{S}$ was first introduced by \cite{chistyakov1964theorem}. For more properties of $\mathscr{S}$ and some related classes, we refer readers to \cite{embrechts2013modelling} and \cite{foss2011introduction}. In addition, a distribution $F$ on $(-\infty,\infty)$ is of the local long-tailed class $\mathscr{L}_{\Delta(t)}$ for some $t>0$, if, uniformly in $y \in [0,1]$,
\begin{eqnarray*}
F(x+y+\Delta(t))\sim F(x+\Delta(t)).
\end{eqnarray*}
Furthermore, if $F\in\mathscr{L}_{\Delta(t)}$ for all $t>0$, we denote it by $F\in \mathscr{L}_\Delta$ which was first proposed by \cite{asmussen2003applied}. Below, we give the definitions of second-order subexponential distributions supported on the nonnegative half-line and the whole real line. {This concept was proposed and developed in \cite{lin2012secondb} and \cite{yang2022seconda}, respectively.}

\begin{definition}
A distribution $F$ on $[0,\infty)$ with a finite mean $\mu_F$ is of the second-order subexponential class, denoted by $F\in\mathscr{S}_{2}$, if $F \in \mathscr{L}_{\Delta}$ and
\begin{eqnarray}\label{eq:2.1}
\overline {{F^{2 * }}} (x) - 2\overline F(x) \sim 2{\mu _F}F(x,x + 1].
\end{eqnarray}
Indeed, the relation \eqref{eq:2.1} is equivalent to 
\begin{align*}
  \frac{\overline {{F^{2 * }}} (x)}{\overline F(x)}= 2+2\mu_F \frac{F(x,x + 1]}{\overline F(x)}+o\(\frac{F(x,x + 1]}{\overline F(x)}\), 
\end{align*}
which is consistent with the first-order expansion of univariate functions from the perspective of the Taylor formula.

\end{definition}

\begin{definition}
 A function $f(\cdot)$ is said to be almost decreasing if it is positive ultimately and
 $$\sup_{y\geq x}f(y)\asymp f(x),~~~~x\rightarrow\infty.$$
\end{definition}

\begin{definition}
A distribution $F$ on $(-\infty,\infty)$ with a finite mean $\mu_F$ is of the second-order subexponential class, denoted by $\widetilde {{\mathscr S}_2}$, if $F^+ \in \mathscr{S}_{2}$ and $F(x,x+1]$ is almost decreasing, where $F^+(x)=F(x){\bf I}_{\{x\geq 0\}}$. 
\end{definition}

As stated in \cite{lin2012seconda} and \cite{yang2022seconda}, we can see that $\widetilde {{\mathscr S}_2}$ is large enough to include the following distributions, such as Pareto, Lognormal and Weibull (with the parameter between 0 and 1) distributions. {Moreover, by Lemma 5.3 of \cite{lin2014second}, we have $\overline{F}^2(x)=o\(F(x,x+1]\)$ if $F\in\widetilde {{\mathscr S}_2}$.}

\subsection{Main results }
 We use the following notations to state the main results. For any distribution $U$, denote by $\mu_U$ the {mean} of $U$. Set $\Lambda_n^i=\{1,2,\ldots,n\}\backslash\{i\}$ and $\Lambda_n^{i,j}=\{1,2,\ldots,n\}\backslash\{i,j\}$.

\begin{theorem}\label{the:1}
Let $\{(X,Y),(X_i,Y_i):i\in\N\}$ be a sequence of real-valued iid random vectors with marginal distributions $F$ and $G$, following a common bivariate FGM distribution \eqref{eq:FGM}  with parameter $r\in[-1,1]$, and let $\{\xi_i,\eta_i:  i\in \N\}$ be two sequences of arbitrarily dependent nonnegative rvs, independent of $\{(X,Y),(X_i,Y_i):i\in\N\}$, satisfying $\p(\xi_i\in[a_1,b_1],\eta_j\in[a_2,b_2])=1$ for all $i,j\in\N$ and some $0<a_1\leq b_1<\infty$, $0<a_2\leq b_2<\infty$. If ${F},{G} \in {\widetilde{\mathscr S}_2}$, then for fixed $n,m\in\N$, it holds that {
\begin{align}\label{eq:the1}
 &\quad \p\(S_n^\xi>x,T_m^\eta>y\)\nn\\
 &=\sum_{i=1}^n\sum_{j=1}^m \p(\xi_iX_i>x,\eta_jY_j>y)\nn\\
 &\quad+\sum_{i=1}^n\sum_{j=1}^m\(\sum_{l\in\Lambda_m^j}\E\[\eta_lY_l\I_{\{\xi_iX_i>x,\eta_jY_j\in(y,y+1]\}}\]+\sum_{l\in\Lambda_n^i}\E\[\xi_lX_l\I_{\{\xi_iX_i\in(x,x+1],\eta_jY_j>y\}}\]\)\nn\\
&\quad+ {\sum_{i=1}^n\sum_{j=1}^mo\(\Big(\p^2\(\xi_iX_i\in(x,x+1],\eta_jY_j>y\)+\p^2\(\xi_iX_i>x,\eta_jY_j\in(y,y+1]\)\Big)^{1/2}\)}.
\end{align}}
\end{theorem}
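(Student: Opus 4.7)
The plan is to reduce the bivariate problem to univariate second-order subexponential asymptotics by first conditioning on the (bounded) weights, then exploiting the additive structure of the FGM joint distribution \eqref{eq:FGM} to split the joint tail into a fully-independent piece and an $r$-correction. After conditioning, the weights become bounded constants $c_i\in[a_1,b_1]$ and $d_j\in[a_2,b_2]$, and by the single-big-jump heuristic for subexponential sums, the joint event $\{S_n^\theta>x,T_m^\Theta>y\}$ is, to leading order, the disjoint union over $(i,j)\in\{1,\ldots,n\}\times\{1,\ldots,m\}$ of events in which $c_iX_i$ alone carries the $X$-sum past $x$ while $d_jY_j$ alone carries the $Y$-sum past $y$. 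Averaging back over the weights produces the first line of \eqref{eq:the1}.

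For the second-order correction, I would lean on the local expansion
\begin{equation*}
\overline{F^{n*}}(x)-n\overline F(x)\sim n(n-1)\mu_F\,F(x,x+1]
\end{equation*}
and its weighted version, which tracks the threshold shift caused by the $(n-1)$ moderate terms on the $X$-side and the $(m-1)$ moderate terms on the $Y$-side. Given that $c_iX_i$ lies in the window $(x,x+1]$, each other term $c_lX_l$ contributes its expectation $c_l\mu_F$ to the residual threshold; combined with the local long-tailedness built into $\widetilde{\mathscr S}_2$, this produces the $\mu_F\sum_{l\in\Lambda_n^i}$ and $\mu_G\sum_{l\in\Lambda_m^j}$ contributions in the second line of \eqref{eq:the1}.

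The key new ingredient is the FGM-induced correction. A direct calculation from \eqref{eq:FGM} yields
\begin{equation*}
P(Y\le y\mid X>x)=G(y)\bigl[1-r\,F(x)\overline G(y)\bigr]\longrightarrow (1-r)G(y)+rG^2(y)\quad(x\to\infty),
\end{equation*}
so conditional on $X_k$ being extreme, the paired $Y_k$ has mean $\mu_G+r(\mu_{G^2}-\mu_G)$ rather than $\mu_G$. When the big $X$-jump occurs at index $j$ and the big $Y$-jump at a different index $i$ with both $i,j\le n\wedge m$ (so that the FGM pairing of $Y_j$ with $X_j$ is actually active within the sums), $Y_j$ enters as one of the moderate $Y$-terms in the threshold-shift computation, and its extra tilt $r(\mu_{G^2}-\mu_G)$ reproduces the third line of \eqref{eq:the1}; the symmetric argument with $X$ and $Y$ swapped delivers the $(\mu_{F^2}-\mu_F)$ term.

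The main obstacles I anticipate are threefold. (a) Uniformity: the local-tail expansions must hold uniformly over weights in $[a_1,b_1]\times[a_2,b_2]$ so that integrating against the weight law reproduces the conditional-expectation expressions in \eqref{eq:the1}; this rests on the almost-decreasing property built into $\widetilde{\mathscr S}_2$ combined with local long-tailedness. (b) Multi-big-jump configurations must be shown to be negligible at the second-order scale, and a careful inclusion-exclusion is needed to prevent double counting between the $\mu_F,\mu_G$ shifts and the $r$-correction; this is where the bulk of the work lies, since distinguishing first-order from second-order remainders under FGM coupling is the real source of friction. (c) The bivariate $o$-error must be calibrated through the $L^2$-type combination $(P^2(\theta_iX_i\in(x,x+1])+P^2(\Theta_jY_j\in(y,y+1]))^{1/2}$ of the two univariate second-order errors, which requires a two-dimensional Potter-type bound; once the FGM tilting identity above is isolated, the remaining structural skeleton of the proof becomes largely routine.
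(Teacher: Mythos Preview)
Your high-level strategy is sound---condition on the bounded weights, obtain a uniform deterministic-weight expansion, then integrate back---and this is exactly how the paper closes the argument. The conditional-tilting identity you isolate, $P(Y\le y\mid X>x)\to(1-r)G(y)+rG^2(y)$, is correct and captures the mechanism behind the third line of \eqref{eq:the1}.

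Where you diverge from the paper is in how the FGM structure is exploited. Rather than tracking the conditional law of $Y_k$ given $X_k$ large, the paper uses the additive representation \eqref{eq:FGM} directly: it introduces auxiliary \emph{independent} variables $X',X^*,Y',Y^*$ with laws $F,F^2,G,G^2$ and writes $P(S_n^c>x,T_m^d>y)$ as a signed linear combination (with coefficients $(1+r),-r,-r,r$) of probabilities of the form $P(\text{independent $X$-sum}>x)\cdot P(\text{independent $Y$-sum}>y)$. Each factor is then handled by a single univariate lemma (their Lemma~3.1, a two-summand second-order expansion uniform in the scaling constants), and the final answer drops out of pure algebra on the $B_{ij}(x)D_{kl}(y)$ terms. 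This sidesteps two of the obstacles you flag: the ``double counting'' between the $\mu_F,\mu_G$ shifts and the $r$-correction is handled automatically by the FGM coefficients, and there is no need to control multi-big-jump events directly, since the univariate lemma already absorbs them into its $o(\cdot)$ term. Your conditional-tilting route should reach the same destination, but you would effectively be re-deriving the mixture decomposition inside the big-jump analysis; the paper's choice to factor it out first makes the remaining argument mechanical.
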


\begin{remark}\label{re:2.1}
The first term in the right-hand side of \eqref{eq:the1} is the first-order asymptotics of the joint tail behavior of randomly weighted sums and the remaining parts are the second-order expansion terms. 
On one hand, by Lemma 3.2 and the similar arguments in Remark 2.1 in  \cite{yang2022secondb}, it is easy to verify that the second term in the right-hand side of \eqref{eq:the1} is weakly equivalent to
$$\sum_{i=1}^n\sum_{j=1}^m\Big(\p\(\xi_iX_i>x,\eta_jY_j\in(y,y+1]\)+\p\(\xi_iX_i\in(x,x+1],\eta_jY_j>y\)\Big).$$
     On the other hand, by the simple fact that $\Delta x+\Delta y\asymp \sqrt{(\Delta x)^2+(\Delta y)^2}$ as $(\Delta x,\Delta y)\rightarrow(0+,0+)$, we have
  \begin{align*}
    &\quad \p\(\xi_iX_i>x,\eta_jY_j\in(y,y+1]\)+\p\(\xi_iX_i\in(x,x+1],\eta_jY_j>y\)\\
   & \asymp \Big(\p^2\(\xi_iX_i\in(x,x+1],\eta_jY_j>y\)+\p^2\(\xi_iX_i>x,\eta_jY_j\in(y,y+1]\)\Big)^{1/2}.
  \end{align*}
Thus, the third term in the right-hand side of \eqref{eq:the1} is also the higher-order infinitesimal of the second one, which is also consistent with the first-order Taylor expansion of bivariate functions if  both sides of the relation \eqref{eq:the1} are divided by $\p(\xi_iX_i>x,\eta_jY_j>y)$.
\end{remark}


   If we further restrict the primary rvs to belong to some smaller heavy-tailed distribution class (for instance, $F$ and $G$ have density functions belonging to the regular variation family; see Corollary \ref{cor:1} below), then the second-order asymptotic formula of the joint tail behavior of randomly weighted sums becomes more explicit. 

\begin{definition}\label{de:3}
	A measurable function $f$ valued on $(-\infty,\infty)$ is \textit{regularly varying} at infinity with index $\alpha \in \R$ if, for all $t>0$,
	\begin{align}\label{eq:de3}
	    \lim\limits_{x \rightarrow \infty} \frac{f(tx)}{f(x)} = t^\alpha,
	\end{align}
	which is denoted by $f \in \RV_{\alpha}$. 
\end{definition}


 {Based on Theorem \ref{the:1}, we have the following corollary in terms of regularly-varying functions, where the parameter condition is to ensure a finite mean.}
\begin{corollary}\label{cor:1}
Under the same conditions of Theorem \ref{the:1}, if we further assume that $F$ and $G$ have respective density functions $f\in\RV_{-(\alpha+1)}$ and $g\in\RV_{-(\beta+1)}$ for some { $\alpha,\beta>1$}, then
\begin{align}\label{eq:cor1}
 &\quad \p\(S_n^\xi>x,T_m^\eta>y\)\nn\\
&=\sum_{i=1}^n\sum_{j=1}^m\p(\xi_iX_i>x,\eta_jY_j>y)\nn\\
&\quad+\sum_{i=1}^n\sum_{j=1}^m\(\mu_{G}g(y)\sum_{l\in\Lambda_m^j}\E\[\eta_l\eta_j^\beta\I_{\{\xi_iX_i>x\}}\]+\mu_{F}f(x)\sum_{l\in\Lambda_n^i}\E\[\xi_l\xi_i^\alpha\I_{\{\eta_jY_j>y\}}\]\)\nn\\
&\quad+r\sum_{i=1}^{n\wedge m}\sum_{j\in\Lambda_m^{i}}\(\mu_{G}\E\[\eta_i\eta_j^\beta\I_{\{\xi_jX_j>x\}}\]+(\mu_{G^2}-\mu_{G})\E\[\eta_i\eta_j^\beta\I_{\{\xi_iX_i>x\}}\]\)g(y)\nn\\
&\quad+r\sum_{j=1}^{n\wedge m}\sum_{i\in\Lambda_n^j}\(\mu_{F}\E\[\xi_j\xi_i^\alpha\I_{\{\eta_iY_i>y\}}\]+(\mu_{F^2}-\mu_{F})\E\[\xi_j\xi_i^\alpha\I_{\{\eta_iY_i>y\}}\]\)f(x)\nn\\
&\quad+ {o\Big(\sqrt{x^2+y^2}f(x)g(y)\Big)}.
\end{align}
\end{corollary}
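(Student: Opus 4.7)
The plan is to deduce Corollary~\ref{cor:1} directly from Theorem~\ref{the:1} by converting each ``local'' probability appearing in \eqref{eq:the1} into a density expression via the regular variation of $f$ and $g$. The key workhorse will be the following identity: for any bounded nonnegative weight $\vartheta$ supported in $[a_1,b_1]$ and any bounded random variable $W$ jointly measurable with the weight family but independent of $X$, conditioning on $(\vartheta,W)$ and writing
\begin{equation*}
  F\bigl((x+1)/\vartheta\bigr)-F(x/\vartheta)=\int_{x/\vartheta}^{(x+1)/\vartheta}f(u)\,\d u\sim \vartheta^{-1}f(x/\vartheta)\sim \vartheta^{\alpha}f(x),
\end{equation*}
together with the uniform convergence theorem for $f\in\RV_{-(\alpha+1)}$ on the compact set $[a_1,b_1]$, yields $E\bigl[W\,\I_{\{\vartheta X\in(x,x+1]\}}\bigr]\sim f(x)\,E[W\vartheta^{\alpha}]$, and symmetrically for $g$.

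The main step is to apply this conversion to the two second-order sums and to the FGM-correction sum in \eqref{eq:the1}, splitting according to whether $i=j$ (so $(X_i,Y_i)$ carries the FGM dependence) or $i\neq j$ (so $X_i$ and $Y_j$ come from different pairs and are independent). In the independent case, conditioning on the weights and invoking the conversion immediately produces the terms $\mu_G g(y)E[\Theta_l\Theta_j^{\beta}\I_{\{\theta_iX_i>x\}}]$ and $\mu_F f(x)E[\theta_l\theta_i^{\alpha}\I_{\{\Theta_jY_j>y\}}]$ that build the second sum of \eqref{eq:cor1}, as well as the $(\mu_{G^2}-\mu_G)$ and $(\mu_{F^2}-\mu_F)$ ingredients of the fourth sum of \eqref{eq:cor1}. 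For the case $i=j\le n\wedge m$, the FGM joint distribution gives
\begin{equation*}
  P\bigl(X>x_0,Y\in(y_1,y_2]\bigr)=\overline F(x_0)[G(y_2)-G(y_1)]-rF(x_0)\overline F(x_0)\bigl[G(y_2)\overline G(y_2)-G(y_1)\overline G(y_1)\bigr],
\end{equation*}
and combined with $\frac{d}{dy}[G(y)\overline G(y)]=g(y)(1-2G(y))\to -g(y)$ as $y\to\infty$, the local probability picks up a factor $(1+r)$ rather than $1$. Splitting $(1+r)=1+r$, the ``$1$'' portion merges into the full double sum forming the second sum of \eqref{eq:cor1}, while the ``$r$'' portion, after relabelling the dummy summation indices, reproduces the third sum of \eqref{eq:cor1}.

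Finally, the Theorem~\ref{the:1} remainder $o\bigl(\sqrt{P^2(\theta_iX_i\in(x,x+1])+P^2(\Theta_jY_j\in(y,y+1])}\bigr)$ reduces to $o\bigl(\sqrt{f^2(x)+g^2(y)}\bigr)$ because the local probabilities are asymptotically proportional to $f(x)$ and $g(y)$, respectively. Any residual contributions produced by the FGM reduction in the previous step (in particular when $n\neq m$, an index-range mismatch may leave behind quantities of order $g(y)\overline F(x)$ or $f(x)\overline G(y)$) are also absorbed, since $g(y)/\sqrt{f^2(x)+g^2(y)}\le 1$ and $\overline F(x)\to 0$ force $g(y)\overline F(x)=o(\sqrt{f^2(x)+g^2(y)})$, and symmetrically for $f(x)\overline G(y)$. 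I expect the main technical obstacle to be the bookkeeping in the second step: the careful tracking of the $(1+r)$ factor arising from the FGM case $i=j$ and the verification that, after index interchange, it distributes correctly between the density main term and the $r$-correction term of \eqref{eq:cor1}, while all remnants lie below the error threshold $o(\sqrt{f^2(x)+g^2(y)})$.
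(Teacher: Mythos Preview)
Your proposal is correct and follows essentially the same route as the paper: both arguments hinge on the regular-variation conversion $P(\vartheta X\in(x,x+1])\sim E[\vartheta^{\alpha}]f(x)$ (obtained from $F(x,x+1]\sim f(x)$ and the uniform convergence theorem), which is then fed back into the terms of Theorem~\ref{the:1}. Your treatment of the FGM case $i=j$---extracting the $(1+r)$ factor and redistributing it into the second and third sums of \eqref{eq:cor1}---is more explicit than the paper's, which dispatches this step in a single sentence (``Since the second term \ldots\ includes a dependent structure, the second and third terms can be directly obtained''), but the underlying mechanism is identical.
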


Furthermore, we consider the sum tail probability for the two randomly weighted sums.

\begin{theorem}\label{the:2}
Under the conditions of Theorem \ref{the:1}, if we further assume that $F(x,x+1]\asymp G(x,x+1]$, then for fixed $n,m\in\N$, it holds that
    \begin{align}\label{eq:the2}
   &\quad \p\(S_n^\xi+T_m^\eta>x\)\nn\\
&=\sum_{i=1}^n\p(\xi_iX_i>x)+\sum_{j=1}^m\p(\eta_jY_j>x)+\mu_F\sum_{i=1}^n\sum_{l\in\Lambda_n^i}\E\[\xi_i\I_{\{\xi_lX_l
    \in(x,x+1]\}}\] \nn\\
&\quad+\mu_G\sum_{j=1}^m\sum_{l\in\Lambda_m^j}\E\[\eta_j\I_{\{\eta_lY_l\in(x,x+1]\}}\]+\sum_{i=1}^n\sum_{j=1}^m\Big(\E\[\xi_iX_i\I_{\{\eta_jY_j\in(x,x+1]\}}\]+\E[\eta_jY_j\I_{\{\xi_iX_i\in(x,x+1]\}}]\Big)\nn\\
    &\quad+o\(\sum_{i=1}^n\p(\xi_iX_i
    \in(x,x+1])+\sum_{j=1}^m\p(\eta_jY_j
    \in(x,x+1])\).
\end{align}
\end{theorem}

{The second-order asymptotic formula of the sum tail probability for the randomly weighted sum is composed of five parts, in which the first and second parts express the first-order asymptotics, the third and fourth parts show the second-order asymptotic portion of a single randomly weighted sum, and the fifth part reveals the second-order asymptotic portion of the interaction of the two randomly weighted sums, which includes the interdependent structure.}

Similarly to the above, the following corollary can be derived from Theorem \ref{the:2}.

\begin{corollary}\label{cor:2}
    Under the conditions of Theorem \ref{the:1}, if we further assume that $F$ and $G$ have respective density {functions} $f,g\in\RV_{-(\alpha+1)}$  for some {$\alpha>1$}, then
  \begin{align}\label{eq:cor2}
    &\quad \p\(S_n^\xi+T_m^\eta>x\)\nn\\
   &=\sum_{i=1}^n\p(\xi_iX_i>x)+\sum_{j=1}^m\p(\eta_jY_j>x)+\mu_F\sum_{i=1}^n\sum_{l\in\Lambda_n^i}\E\[\xi_i\xi_l^{\alpha}\]f(x) \nn\\   
&\quad+\mu_G\sum_{j=1}^m\sum_{l\in\Lambda_m^j}\E\[\eta_j\eta_l^\alpha\]g(x)+\sum_{i=1}^n\sum_{j=1}^m\Big(\mu_F\E\[\xi_i\eta_j^\alpha\]g(x)+\mu_G\E\[\eta_j\xi_i^\alpha\]f(x)\Big)\nn\\
    &\quad+r\sum_{i=1}^{n\wedge m}\Big(\E\[\xi_i\eta_i^\alpha\](\mu_{F^2}-\mu_F)g(x)+\E\[\eta_i\xi_i^\alpha\](\mu_{G^2}-\mu_G)f(x)\Big)+o\(f(x)\).
\end{align}
\end{corollary}

\setcounter{equation}{0}\par
\section{Proofs of the main results}\label{sec:appendix}

In this section, we first prepare two lemmas and the proof of Theorem \ref{the:1} is given afterwards. The following Lemma \ref{lem:1}, which may be of its own interest, is 
a non-identically distributed version of Theorem 3.1 in \cite{lin2020second}. Lemma \ref{lem:2} plays an important role in the proof of Theorem \ref{the:1}.
Next, we present Lemmas \ref{lem:3} and \ref{lem:4} and the proof of Theorem \ref{the:2} is given at the end of this paper.
For convenience in writing, we use the following symbols:
$$B_{11}(x):=\overline{F}\(\frac{x}{c_1}\)+\overline{F}\(\frac{x}{c_2}\)+c_2\mu_{F}F\(\frac{x}{c_1},\frac{x+1}{c_1}\](1+o(1))+c_1\mu_{F}F\(\frac{x}{c_2},\frac{x+1}{c_2}\](1+o(1));$$
$$B_{12}(x):=\overline{F}\(\frac{x}{c_1}\)+2\overline{F}\(\frac{x}{c_2}\)+c_2\mu_{F^2}F\(\frac{x}{c_1},\frac{x+1}{c_1}\](1+o(1))+2c_1\mu_{F}F\(\frac{x}{c_2},\frac{x+1}{c_2}\](1+o(1));$$
$$B_{21}(x):=2\overline{F}\(\frac{x}{c_1}\)+\overline{F}\(\frac{x}{c_2}\)+2c_2\mu_{F}F\(\frac{x}{c_1},\frac{x+1}{c_1}\](1+o(1))+c_1\mu_{F^2}F\(\frac{x}{c_2},\frac{x+1}{c_2}\](1+o(1));$$
$$B_{22}(x):=2\overline{F}\(\frac{x}{c_1}\)+2\overline{F}\(\frac{x}{c_2}\)+2c_2\mu_{F^2}F\(\frac{x}{c_1},\frac{x+1}{c_1}\](1+o(1))+2c_1\mu_{F^2}F\(\frac{x}{c_2},\frac{x+1}{c_2}\](1+o(1));$$
$$D_{11}(y):=\overline{G}\(\frac{y}{d_1}\)+\overline{G}\(\frac{y}{d_2}\)+d_2\mu_{G}G\(\frac{y}{d_1},\frac{y+1}{d_1}\](1+o(1))+d_1\mu_{G}G\(\frac{y}{d_2},\frac{y+1}{d_2}\](1+o(1));$$
$$D_{12}(y):=\overline{G}\(\frac{y}{d_1}\)+2\overline{G}\(\frac{y}{d_2}\)+d_2\mu_{G^2}G\(\frac{y}{d_1},\frac{y+1}{d_1}\](1+o(1))+2d_1\mu_{G}G\(\frac{y}{d_2},\frac{y+1}{d_2}\](1+o(1));$$
$$D_{21}(y):=2\overline{G}\(\frac{y}{d_1}\)+\overline{G}\(\frac{y}{d_2}\)+2d_2\mu_{G}G\(\frac{y}{d_1},\frac{y+1}{d_1}\](1+o(1))+d_1\mu_{G^2}G\(\frac{y}{d_2},\frac{y+1}{d_2}\](1+o(1));$$
$$D_{22}(y):=2\overline{G}\(\frac{y}{d_1}\)+2\overline{G}\(\frac{y}{d_2}\)+2d_2\mu_{G^2}G\(\frac{y}{d_1},\frac{y+1}{d_1}\](1+o(1))+2d_1\mu_{G^2}G\(\frac{y}{d_2},\frac{y+1}{d_2}\](1+o(1)).$$

\begin{lemma}\label{lem:1}
Let $X_1$ and $X_2$ be two independent real-valued rvs with respective distributions $F_1$ and $F_2$. If ${F_1} \in {\widetilde{\mathscr S}_2} $ and there exist constants $K>0,A\in\R$ such that
\begin{align}\label{eq:3.1}
\lim_{x\rightarrow\infty}\frac{\overline{F_2}(x)-K\overline{F_1}(x)}{F_1(x,x+1]}=A,
\end{align}
then we have ${F_2} \in \widetilde{\mathscr S}_2 $. In addition, for any fixed $0<a\leq b<\infty$, the relation
\begin{align*}
\p\(c_1X_1+c_2X_2>x\)
&=\overline{F_1}\(\frac{x}{c_1}\)+K\overline{F_1}\(\frac{x}{c_2}\)+(A+Kc_1\mu_{F_1})\p(c_2X_1\in(x,x+1])(1+o(1))\nn\\
&\quad+c_2\mu_{F_2}\p(c_1X_1\in(x,x+1])(1+o(1))\nn\\
&=\sum_{i=1}^2\(\p(c_iX_i>x)+c_i\mu_{F_i}\sum_{j\neq i}^2\p(c_jX_j\in(x,x+1])(1+o(1))\)
\end{align*}
holds uniformly for $(c_1,c_2)\in[a,b]^2$.
\end{lemma}
\begin{proof} First, the relation \eqref{eq:3.1} is equivalent to
\begin{align*}
 \overline F_2(x)=K\overline F_1(x)+AF_1(x,x+1]+o\big(F_1(x,x+1]\big).
 \end{align*}
Thus, according to Proposition 2.3 of \cite{lin2012secondb}, we know that $F_2^+\in \mathscr S_2$. On the other hand, from the relation \eqref{eq:3.1}, it is easy to see
\begin{align*}
  F_2(x,x+1]=K F_1(x,x+1]+o\big(F_1(x,x+1]\big),
 \end{align*}
 which implies that $F_2(x,x+1]$ is also almost decreasing as $x\rightarrow\infty$, and hence we have ${F_2} \in \widetilde{\mathscr S}_2$.
 
Next, for any $0<\epsilon<1$ and any fixed $\delta>0$, according to \eqref{eq:3.1} and Lemma 4.4  of \cite{lin2020second},  there exist constants $C_1,\omega_0 \geq 0$ such that for all $t\geq \delta, \tau\geq 1$ and $x>w_0$, 
 \begin{align}\label{eq:3.2}
 (A-\epsilon)F_1(x,x+1]\leq \overline{F_2}(x)-K\overline{F_1}(x)\leq (A+\epsilon)F_1(x,x+1],
 \end{align}
 and
 \begin{align}\label{eq:3.3}
    F_i(x,x+\tau t]\leq C_1\tau F_i(x,x+t],~~~~~i=1,2.
 \end{align}
Furthermore, it is clear that
 \begin{align}\label{eq:sum1}
 \p\(c_1X_1+c_2X_2>x\)&=\int_{-\infty}^{\infty}\p(c_2X_2>x-t)\p(c_1X_1\in\d t)\nn\\
 &=\overline{F_2}\(\frac{x}{c_2}\)+\int_{-\infty}^{\infty}\(\overline{F_2}\(\frac{x-t}{c_2}\)-\overline{F_2}\(\frac{x}{c_2}\)\)\p(c_1X_1\in\d t).
 \end{align}
Now we deal with the integration term in the right-hand side of \eqref{eq:sum1}. For all {$w > w_0 \geq0$ and} $x>2\omega>2b\omega_0$, we divide it into four terms as
 \begin{align}\label{eq:J}
\int_{-\infty}^{-w}+\int_{-w}^{w}+\int_{w}^{x-w}+\int_{x-w}^{\infty}:=J_1+J_2+J_3+J_4.
 \end{align}
 To deal with $J_1$, by \eqref{eq:3.3}, for all $(c_1,c_2)\in[a,b]^2$, we have
\begin{align*}
J_1&=\int_{-\infty}^{-w}F_2\(\frac{x}{c_2},\frac{x-t}{c_2}\]\p(c_1X_1\in\d t)\leq C_1F_2\(\frac{x}{c_2},\frac{x+1}{c_2}\]\int_{-\infty}^{-w}|t|\p(c_1X_1\in\d t).\nn
\end{align*}
 For $J_2$, due to Lemma 4.8 of \cite{lin2020second}, as $x\rightarrow\infty$, we have, uniformly for $(c_1,c_2)\in[a,b]^2$,
\begin{align*}
 J_2&= \p(c_2X_2\in(x,x+1])\int_{-w}^wt\p(c_1X_1\in\d t)(1+o(1)).
 \end{align*} 
Since $\mu_{F_1}<\infty$, as $w\rightarrow\infty$, we have
\begin{align}\label{eq:J1}
   J_1=o\(\p(c_2X_2\in(x,x+1])\), 
\end{align}
and
\begin{align}\label{eq:J2}
J_2=c_1\mu_{F_1}\p(c_2X_2\in(x,x+1])=Kc_1\mu_{F_1}\p(c_2X_1\in(x,x+1])(1+o(1)).
\end{align}
{For $J_3$, by \eqref{eq:3.2} and \eqref{eq:3.3},  as $x\rightarrow\infty$,  we obtain that, uniformly for $(c_1,c_2)\in[a,b]^2$,
 \begin{align*}
J_3&=\int_w^{x-w}\(\(\overline{F_2}\(\frac{x-t}{c_2}\)-K\overline{F_1}\(\frac{x-t}{c_2}\)\)-\(\overline{F_2}\(\frac{x}{c_2}\)-K\overline{F_1}\(\frac{x}{c_2}\)\)\)\p(c_1X_1\in\d t)\\
 &\quad {+}K\int_w^{x-w}\(\overline{F_1}\(\frac{x-t}{c_2}\)-\overline{F_1}\(\frac{x}{c_2}\)\)\p(c_1X_1\in\d t)\\
  &\leq \int_w^{x-w}\((A+\epsilon)F_1\(\frac{x-t}{c_2},\frac{x-t}{c_2}+1\right]-(A-\epsilon)F_1\(\frac{x}{c_2},\frac{x}{c_2}+1\right]\)\p(c_1X_1\in\d t)\\
 &\quad +K\int_w^{x-w}F_1\(\frac{x-t}{c_2},\frac{x}{c_2}\right]\p(c_1X_1\in\d t)\\
 &\leq \int_w^{x-w}\((|A|+\epsilon)  F_1\(\frac{x-t}{c_2},\frac{x}{c_2}+1\right] {+}KF_1\(\frac{x-t}{c_2},\frac{x}{c_2}\]\)\p(c_1X_1\in\d t)\\
 &\leq \(C_1\(1+\frac{b}{w}\)(|A|+\epsilon)+K\)\int_w^{x-w}F_1\(\frac{x-t}{c_2},\frac{x}{c_2}\]\p(c_1X_1\in\d t).
 \end{align*}
According to the same treatments of $J_3$ of Lemma 4.11 in \cite{lin2020second}, by first letting $x\rightarrow\infty$ and then  $w\rightarrow\infty$,  it holds uniformly for all $(c_1,c_2)\in[a,b]^2$ that
\begin{align}\label{eq:J3}
   J_3= o\(\p(c_1X_1\in(x,x+1])+\p(c_2X_2\in(x,x+1])\).
\end{align}
 To deal with $J_4$, 
 for all $(c_1,c_2)\in[a,b]^2$, we have
 \begin{align*}
J_4&=\p\(c_1X_1+c_2X_2>x,c_1X_1>x-w\)-\overline{F_1}\(\frac{x-w}{c_1}\)\overline{F_2}\(\frac{x}{c_2}\)\nn\\
&=\overline{F_1}\(\frac{x}{c_1}\)+\(\int_{-\infty}^{-w}+\int_{-w}^w\)\(\overline{F_1}\(\frac{x-t}{c_1}\)-  \overline{F_1}\(\frac{x}{c_1}\)\)\p(c_2X_2\in\d t)\nn\\
&\quad+ {F_1}\(\frac{x-w}{c_1},\frac{x}{c_1}\]\overline{F_2}\(\frac{w}{c_2}\)-\overline{F_1}\(\frac{x-w}{c_1}\)\overline{F_2}\(\frac{x}{c_2}\)\nn\\  
&:=\overline{F_1}\(\frac{x}{c_1}\)+J_{41}+J_{42}+J_{43}-J_{44}.
 \end{align*}
 For $J_{41}$ and $J_{42}$, using similar treatments in $J_1$ and $J_2$ and noting $\mu_{F_2}<\infty$, by first letting $x\rightarrow\infty$ and then  $w\rightarrow\infty$, it holds uniformly for all $(c_1,c_2)\in[a,b]^2$ that
 \begin{align*}
    J_{41}=o\(\p(c_1X_1\in(x,x+1])\),
 \end{align*}
 and
 \begin{align*}
    J_{42}= c_2\mu_{F_2}\p(c_1X_1\in(x,x+1])(1+o(1)).
 \end{align*}
 For $J_{43}$, by Lemma 4.8 of \cite{lin2020second}, as $x\rightarrow\infty$, we have uniformly for all $(c_1,c_2)\in[a,b]^2$, 
 \begin{align*}
   J_{43}=w\p(c_1X_1\in(x,x+1])\overline{F_2}\(\frac{w}{c_2}\)(1+o(1)),
 \end{align*}
which further implies that, uniformly for all $(c_1,c_2)\in[a,b]^2$, 
 \begin{align*}
   J_{43}=o\(\p(c_1X_1\in(x,x+1])\),
 \end{align*}
by letting  $w\rightarrow\infty$ together with $\mu_{F_2}<\infty$.  For $J_{44}$, according to $F_1,F_2\in \widetilde{\mathscr S}_2$ and \eqref{eq:3.3}, as $x\rightarrow\infty$, it holds uniformly for all $(c_1,c_2)\in[a,b]^2$ that
 \begin{align*}
   J_{44}&\leq \frac{1}{2}\(\overline{F_1}^2\(\frac{x-w}{c_1}\)+\overline{F_2}^2\(\frac{x}{c_2}\)\)\\
   &=o\(F_1\(\frac{x-w}{c_1},\frac{x-w}{c_1}+1\]+F_2\(\frac{x}{c_2},\frac{x}{c_2}+1\]\)\\
   &=o\(F_1\(\frac{x}{c_1},\frac{x+1}{c_1}\]+F_2\(\frac{x}{c_2},\frac{x+1}{c_2}\]\).
 \end{align*}
Thus, we have uniformly for all $(c_1,c_2)\in[a,b]^2$, 
\begin{align}\label{eq:J4}
   J_4=\overline{F_1}\(\frac{x}{c_1}\)+  c_2\mu_{F_2}\p(c_1X_1\in(x,x+1])(1+o(1))+o\(\p(c_2X_2\in(x,x+1])\).
\end{align}
 Substituting \eqref{eq:J1}-\eqref{eq:J4} into \eqref{eq:J} and \eqref{eq:sum1} yields the desired result.}
    \end{proof}
 


 %


\begin{lemma}\label{lem:2}
Let $\{(X,Y),(X_i,Y_i):i\in\N\}$ be a sequence of real-valued iid random vectors with marginal distributions $F$ and $G$, following a common bivariate FGM distribution \eqref{eq:FGM}  with parameter $r\in[-1,1]$. If ${F,G} \in {\widetilde{\mathscr S}_2}$, then for any fixed $n,m\in\N$ and some $0<a\leq b<\infty$, it holds uniformly for $(\underline{c_n},\underline{d_m}):=(c_1,\ldots,c_n,d_1,\ldots,d_m)\in[a,b]^{n+m}$ that
\begin{align}\label{eq:sum2}
 &\quad \p\(S_n^c>x,T_m^d>y\)\nn\\
&=\sum_{i=1}^n\sum_{j=1}^m\p(c_iX_i>x,d_jY_j>y)\nn\\
&\quad+\sum_{i=1}^n\sum_{j=1}^m\(\sum_{l\in\Lambda_m^j}d_l\E\[Y_l\I_{\{c_iX_i>x,d_jY_j\in(y,y+1]\}}\]+\sum_{l\in\Lambda_n^i}c_l\E\[X_l\I_{\{c_iX_i\in(x,x+1],d_jY_j>y\}}\]\)\nn\\
&\quad+ {\sum_{i=1}^n\sum_{j=1}^mo\(\(\overline{F}^2\(\frac{x}{c_i}\)G^2\(\frac{y}{d_j},\frac{y+1}{d_j}\]+F^2\(\frac{x}{c_i},\frac{x+1}{c_i}\]\overline{G}^2\(\frac{y}{d_j}\)\)^{1/2}\)}.
\end{align}
\end{lemma}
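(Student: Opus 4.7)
To prove Lemma~\ref{lem:2} my plan is to combine two ingredients: (i) an $n$-term extension of Lemma~\ref{lem:1}, and (ii) the signed-measure representation of the FGM joint density $\pi(x,y)=f(x)g(y)\bigl[1+r(1-2F(x))(1-2G(y))\bigr]$ obtained by differentiating the FGM CDF. Since the $n\wedge m$ pairs $(X_i,Y_i)$ are mutually independent, the product $\prod_{i=1}^{n\wedge m}\bigl[1+r(1-2F(X_i))(1-2G(Y_i))\bigr]$ expands as $\sum_{S\subseteq\{1,\dots,n\wedge m\}}r^{|S|}\prod_{i\in S}(1-2F(X_i))(1-2G(Y_i))$. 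Taking expectation of $\I_{\{S_n^c>x,\,T_m^d>y\}}$ against this signed expansion, and using that in each term the joint measure factors over $X$'s and $Y$'s, yields
\begin{align*}
P(S_n^c>x,\,T_m^d>y)=\sum_{S\subseteq\{1,\dots,n\wedge m\}}r^{|S|}\,A_S(x)\,B_S(y),
\end{align*}
where $A_S(x):=E^{\mathrm{ind}}\bigl[\I_{\{S_n^c>x\}}\prod_{i\in S}(1-2F(X_i))\bigr]$ and $B_S(y)$ is defined symmetrically, with $E^{\mathrm{ind}}$ the expectation under the fully independent measure on $\{X_i\}\cup\{Y_j\}$.

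As Step~1, I would first establish the following $n$-term extension of Lemma~\ref{lem:1} by induction on $n$: uniformly for $(c_1,\dots,c_n)\in[a,b]^n$,
\begin{align*}
P\Bigl(\sum_{i=1}^{n}c_iX_i>x\Bigr)=\sum_{i=1}^{n}\overline F(x/c_i)+\mu_F\sum_{i=1}^{n}\Bigl(\sum_{l\ne i}c_l\Bigr)P(c_iX_i\in(x,x+1])\,(1+o(1)),
\end{align*}
and analogously for $T_m^d$. Each inductive step applies Lemma~\ref{lem:1} (with $F_1=F_2=F$, $K=1$, $A=0$) to the pair $\bigl(\sum_{i<n}c_iX_i,\;c_nX_n\bigr)$; closure of $\widetilde{\mathscr S}_2$ under convolution (Proposition~2.3 of \cite{lin2012secondb}) keeps the induction running.

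For Step~2 I partition the sum over $S$ by $|S|$. The case $S=\varnothing$ reduces to $P^{\mathrm{ind}}(S_n^c>x)\,P^{\mathrm{ind}}(T_m^d>y)$, whose product expansion via Step~1 produces the $i\ne j$ portions of both the first-order $\overline F(x/c_i)\overline G(y/d_j)$ sum and the $\mu_F$- and $\mu_G$-mean terms on the right side of~\eqref{eq:sum2}. For $|S|=1$, say $S=\{k\}$, the key identity $E[h(X_k)F(X_k)]=\tfrac12 E[h(\widetilde X_k)]$ with $\widetilde X_k\sim F^2$ gives $A_{\{k\}}(x)=P(S_n^c>x)-P(S_n^{c,(k)}>x)$, where $S_n^{c,(k)}$ replaces $X_k$ by $\widetilde X_k$. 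Applying Step~1 to both summands, together with $\overline{F^2}(x)\sim 2\overline F(x)$ and $F^2(x,x+1]\sim 2F(x,x+1]$, the leading orders cancel to $-\overline F(x/c_k)$, leaving a second-order remainder that combines $c_k(\mu_F-\mu_{F^2})\sum_{l\ne k}P(c_lX_l\in(x,x+1])$ with $-\mu_F\sum_{l\ne k}c_l\,P(c_kX_k\in(x,x+1])$; a symmetric formula holds for $B_{\{k\}}(y)$. Multiplying $r\,A_{\{k\}}(x)\,B_{\{k\}}(y)$ and summing over $k\le n\wedge m$ then yields (a) the $r\,\overline F(x/c_k)\overline G(y/d_k)$ leading cross-term, which promotes the $i=j$ diagonal of the first-order joint tail from the independent form to the FGM value $(1+rF(x/c_k)G(y/d_k))\overline F(x/c_k)\overline G(y/d_k)$; (b) the $r(\mu_{G^2}-\mu_G)$ and $r(\mu_{F^2}-\mu_F)$ correction terms in the third line of~\eqref{eq:sum2}; and (c) the remaining pieces that upgrade the $i=j$ entries of the second-order mean term from the independent to the FGM form. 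For $|S|\ge 2$, an analogous iterated-substitution argument shows that the leading $\overline F$- and $\overline G$-orders of $A_S$ and $B_S$ cancel identically, so $A_S(x)B_S(y)$ is of order (s.o.t.)$\times$(s.o.t.), absorbed into $o\bigl(\sum_{i,j}\bigl(F(x/c_i,x/c_i+1]^2+G(y/d_j,y/d_j+1]^2\bigr)^{1/2}\bigr)$. Uniformity in $(c_i,d_j)\in[a,b]^{n+m}$ is inherited throughout from Lemma~\ref{lem:1}.

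The main obstacle will be the combinatorial bookkeeping in Step~2: verifying cleanly that the mixture of $|S|=0$ and $|S|=1$ contributions reproduces \emph{exactly} the six pieces on the right side of~\eqref{eq:sum2} without leftover cross-terms, and making the $|S|\ge 2$ remainder estimate sharp enough under only the assumption $F,G\in\widetilde{\mathscr S}_2$ (without stronger regular-variation-type hypotheses on $F$ and $G$).
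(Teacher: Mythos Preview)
Your approach is correct and takes a genuinely different route from the paper. The paper works out only the case $n=m=2$ by brute force: it applies the four-term FGM decomposition $\Pi=(1+r)FG-rF^2G-rFG^2+rF^2G^2$ successively to $(X_1,Y_1)$ and $(X_2,Y_2)$, obtains sixteen product terms $B_{pq}(x)D_{rs}(y)$, evaluates each with Lemma~\ref{lem:1} (taking $K=2$, $A=0$ for the $F^2$-components), and then recombines them algebraically; the case $r=-1$ is treated separately because the coefficient $(1+r)$ vanishes, and the extension to general $n,m$ is only sketched. Your two-term decomposition $\Pi=F\otimes G+r\,(F-F^2)\otimes(G-G^2)$ yields the much more compact $2^{n\wedge m}$-subset expansion $\sum_S r^{|S|}A_S(x)B_S(y)$, which handles all $r\in[-1,1]$ at once and scales directly to arbitrary $n,m$; the paper's sixteen terms are just your four subsets $S\subseteq\{1,2\}$ re-expanded. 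What you gain is a cleaner induction and a transparent reason why $|S|\ge 2$ is negligible (product of two second-order quantities), whereas the paper's gain is that its ``replace $X_k$ by $X_k^\ast\sim F^2$'' moves stay at the level of genuine probabilities, so Lemma~\ref{lem:1} applies verbatim without any signed-measure bookkeeping.

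Two small points to tighten. First, phrase the FGM decomposition at the measure level $\Pi=F\otimes G+r\,(F-F^2)\otimes(G-G^2)$ rather than via the density $\pi=fg[1+r(1-2F)(1-2G)]$; the lemma does not assume $F,G$ admit densities, and your key identity $E[h(X_k)F(X_k)]=\tfrac12 E[h(\widetilde X_k)]$ needs $F$ continuous to hold as stated. Second, in matching the $|S|=0$ and $|S|=1$ leading pieces to $\sum_{i,j}P(c_iX_i>x,d_jY_j>y)$ you will pick up a discrepancy of order $r\,\overline F(x/c_k)^2\overline G(y/d_k)$ on the diagonal $i=j=k$ (since the exact FGM diagonal is $(1+rFG)\overline F\,\overline G$, not $(1+r)\overline F\,\overline G$); this is absorbed into the $o(\cdot)$ remainder precisely because $\overline F(x)^2=o(F(x,x+1])$ for $F\in\widetilde{\mathscr S}_2$, which is exactly the fact the paper also invokes (implicitly) when it checks $\lim(\overline{F^2}-2\overline F)/F(x,x+1]=0$ before applying Lemma~\ref{lem:1}.
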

\begin{proof}
Since the desired result is trivial for $n=m=1$, it suffices to prove the case of $n,m\geq 2$. Due to the complexity of the presentation, we only present the proof process for the case of $n=m=2$ and the extension to $n,m> 2$ can follow from the same logic. 
According to \eqref{eq:FGM}, for $r\in[-1,1]$, the bivariate distribution function $\Pi(\cdot,\cdot)$ can be decomposed into four terms:
\begin{align}\label{eq:decom1}
 \Pi=(1+r)FG-rF^2G-rFG^2+rF^2G^2.
\end{align}
In particular, when $r=-1$, the first term in \eqref{eq:decom1} vanishes. Apart from the absence of this term, the subsequent argument remains unchanged; the key reason is that we are doing a second-order expansion. Therefore, to avoid repetition, we do not present a separate proof for the case $r=-1$; instead, we incorporate it into the proof for $r\in(-1,1]$.


According to the decomposition \eqref{eq:decom1}, we introduce four independent rvs $X_1',X_1^*,Y_1'$ and $Y_1^*$, independent of $(X_2,Y_2)$, with corresponding distribution functions $F,F^2,G$ and $G^2$. Thus, we split the following probability into four parts as
\begin{align*}
\p\(c_1X_1+c_2X_2>x,d_1Y_1+d_2Y_2>y\)=\sum_{i=1}^4I_{i}(x,y),
\end{align*}
with
\begin{align*}
\left\{
\begin{array}
[c]{lll}%
I_{1}(x,y)=(1+r) \p\(c_1X'_1+c_2X_2>x,d_1Y'_1+d_2Y_2>y\),\\
I_{2}(x,y)=-r \p\(c_1X^*_1+c_2X_2>x,d_1Y'_1+d_2Y_2>y\),\\
I_{3}(x,y)=-r \p\(c_1X'_1+c_2X_2>x,d_1Y^*_1+d_2Y_2>y\),\\
I_{4}(x,y)=r \p\(c_1X^*_1+c_2X_2>x,d_1Y^*_1+d_2Y_2>y\).
\end{array}
\right.
\end{align*}
 Then further, let $(X_2',X_2^*,Y_2',Y_2^*)$ be an independent copy of $(X_1',X_1^*,Y_1',Y_1^*)$ and independent of all the random sources mentioned above. Then, $I_{1}(x,y)$ can be further decomposed into four parts as follows:
 \begin{align*}
 I_{1}(x,y)=(1+r)\sum_{j=1}^4I_{1j}(x,y),
 \end{align*}
with
\begin{align*}
\left\{
\begin{array}
[c]{lll}%
I_{11}(x,y)=(1+r) \p\(c_1X'_1+c_2X'_2>x\)\p\(d_1Y'_1+d_2Y'_2>y\),\\
I_{12}(x,y)=-r \p\(c_1X'_1+c_2X^*_2>x\)\p\(d_1Y'_1+d_2Y'_2>y\),\\
I_{13}(x,y)=-r \p\(c_1X'_1+c_2X'_2>x\)\p\(d_1Y'_1+d_2Y^*_2>y\),\\
I_{14}(x,y)=r \p\(c_1X'_1+c_2X^*_2>x\)\p\(d_1Y'_1+d_2Y^*_2>y\).\\
\end{array}
\right.
\end{align*}
 Due to $F,G \in {\widetilde{\mathscr S}_2}$, it is obvious that $$\lim_{x\rightarrow\infty}\frac{\overline {F^2}(x)-2\overline{F}(x)}{F(x,x+1]}=\lim_{x\rightarrow\infty}\frac{\overline{F}(x)(2-\overline{F}(x))-2\overline{F}(x)}{F(x,x+1]}=0,$$
and
 $$\lim_{x\rightarrow\infty}\frac{\overline {G^2}(x)-2\overline{G}(x)}{G(x,x+1]}=\lim_{x\rightarrow\infty}\frac{\overline{G}(x)(2-\overline{G}(x))-2\overline{G}(x)}{G(x,x+1]}=0.$$
Applying Lemma \ref{lem:1} with $K=2$ and $A=0$ for $B_{12},B_{21},D_{12},D_{21}$, it holds uniformly for $(\underline{c_2},\underline{d_2})\in[a,b]^4$ that
\begin{align*}
\left\{
\begin{array}
[c]{lll}%
I_{11}(x,y)=(1+r)B_{11}(x)D_{11}(y),\\
I_{12}(x,y)=-r B_{12}(x)D_{11}(y),\\
I_{13}(x,y)=-r B_{11}(x)D_{12}(y),\\
I_{14}(x,y)=r B_{12}(x)D_{12}(y).\\
\end{array}
\right.
\end{align*}
Hence, we have, uniformly for $(\underline{c_2},\underline{d_2})\in[a,b]^4$,
$$I_{1}(x,y)=(1+r)\Big(B_{11}(x)D_{11}(y)+r\big(B_{12}(x)-B_{11}(x)\big)\big(D_{12}(y)-D_{11}(y)\big)\Big).$$
By the same procedures as above, we can derive that, uniformly for $(\underline{c_2},\underline{d_2})\in[a,b]^4$,
\begin{align*}
\left\{
\begin{array}
[c]{lll}%
I_{2}(x,y)=-r \Big(B_{21}(x)D_{11}(y)+r\big(B_{22}(x)-B_{21}(x)\big)\big(D_{12}(y)-D_{11}(y)\big)\Big),\\
I_{3}(x,y)=-r \Big(B_{11}(x)D_{21}(y)+r\big(B_{12}(x)-B_{11}(x)\big)\big(D_{22}(y)-D_{21}(y)\big)\Big),\\
I_{4}(x,y)=r \Big(B_{21}(x)D_{21}(y)+r\big(B_{22}(x)-B_{21}(x)\big)\big(D_{22}(y)-D_{21}(y)\big)\Big).\\
\end{array}
\right.
\end{align*}
 Combining all these results, we obtain that, uniformly for $(\underline{c_2},\underline{d_2})\in[a,b]^4$,
\begin{align*}
&\quad\sum_{i=1}^4I_{i}(x,y)\\
&=B_{11}(x)D_{11}(y)+r\big(B_{12}(x)-B_{11}(x)\big)\big(D_{12}(y)-D_{11}(y)\big)\\
&\quad +r\big(B_{21}(x)-B_{11}(x)\big)\big(D_{21}(y)-D_{11}(y)\big)\nn\\
&\quad+r^2\big(B_{22}(x)-B_{21}(x)-B_{12}(x)+B_{11}(x)\big)\big(D_{22}(y)-D_{21}(y)-D_{12}(y)+D_{11}(y)\big).
\end{align*}

For $B_{11}(x)D_{11}(y)$, it follows that, uniformly for $(\underline{c_2},\underline{d_2})\in[a,b]^4$,
\begin{align*}
 B_{11}(x)D_{11}(y)&=\sum_{i=1}^2\sum_{j=1}^2\Bigg(\overline{F}\(\frac{x}{c_i}\)\overline{G}\(\frac{y}{d_j}\)+\overline{F}\(\frac{x}{c_i}\)\sum_{l\in\Lambda_2^j}d_l\mu_{G}G\(\frac{y}{d_j},\frac{y+1}{d_j}\](1+o(1))\\
&\quad+\overline{G}\(\frac{y}{d_j}\)\sum_{l\in\Lambda_2^i}c_l\mu_{F}F\(\frac{x}{c_i},\frac{x+1}{c_i}\](1+o(1))\Bigg).
\end{align*}
For $\big(B_{12}(x)-B_{11}(x)\big)\big(D_{12}(y)-D_{11}(y)\big)$, we have that, uniformly for $(\underline{c_2},\underline{d_2})\in[a,b]^4$,
\begin{align*}
  &\quad \big(B_{12}(x)-B_{11}(x)\big)\big(D_{12}(y)-D_{11}(y)\big) \\
  &=\(\overline{F}\(\frac{x}{c_2}\)+c_2(\mu_{F^2}-\mu_F)F\(\frac{x}{c_1},\frac{x+1}{c_1}\](1+o(1))+c_1\mu_{F}F\(\frac{x}{c_2},\frac{x+1}{c_2}\](1+o(1))\)\\
  &\quad \times\(\overline{G}\(\frac{y}{d_2}\)+d_2\(\mu_{G^2}-\mu_G\)G\(\frac{y}{d_1},\frac{y+1}{d_1}\](1+o(1))+d_1\mu_{G}G\(\frac{y}{d_2},\frac{y+1}{d_2}\](1+o(1))\)\\
  &=\overline{F}\(\frac{x}{c_2}\)\(\overline{G}\(\frac{y}{d_2}\)+d_2\(\mu_{G^2}-\mu_G\)G\(\frac{y}{d_1},\frac{y+1}{d_1}\](1+o(1))+d_1\mu_{G}G\(\frac{y}{d_2},\frac{y+1}{d_2}\](1+o(1))\)\\
  &\quad+\overline{G}\(\frac{y}{d_2}\)\(c_2(\mu_{F^2}-\mu_F)F\(\frac{x}{c_1},\frac{x+1}{c_1}\](1+o(1))+c_1\mu_{F}F\(\frac{x}{c_2},\frac{x+1}{c_2}\](1+o(1))\).
\end{align*}
Similarly, it follows that, uniformly for $(\underline{c_2},\underline{d_2})\in[a,b]^4$,
\begin{align*}
  &\quad \big(B_{21}(x)-B_{11}(x)\big)\big(D_{21}(y)-D_{11}(y)\big) \\
  &=\(\overline{F}\(\frac{x}{c_1}\)+c_1(\mu_{F^2}-\mu_F)F\(\frac{x}{c_2},\frac{x+1}{c_2}\](1+o(1))+c_2\mu_{F}F\(\frac{x}{c_1},\frac{x+1}{c_1}\](1+o(1))\)\\
  &\quad \times\(\overline{G}\(\frac{y}{d_1}\)+d_1\(\mu_{G^2}-\mu_G\)G\(\frac{y}{d_2},\frac{y+1}{d_2}\](1+o(1))+d_2\mu_{G}G\(\frac{y}{d_1},\frac{y+1}{d_1}\](1+o(1))\)\\
  &=\overline{F}\(\frac{x}{c_1}\)\(\overline{G}\(\frac{y}{d_1}\)+d_1\(\mu_{G^2}-\mu_G\)G\(\frac{y}{d_2},\frac{y+1}{d_2}\](1+o(1))+d_2\mu_{G}G\(\frac{y}{d_1},\frac{y+1}{d_1}\](1+o(1))\)\\
  &\quad+\overline{G}\(\frac{y}{d_1}\)\(c_1(\mu_{F^2}-\mu_F)F\(\frac{x}{c_2},\frac{x+1}{c_2}\](1+o(1))+c_2\mu_{F}F\(\frac{x}{c_1},\frac{x+1}{c_1}\](1+o(1))\),
\end{align*}
and
\begin{align*}
 &\quad \big(B_{22}(x)-B_{21}(x)-B_{12}(x)+B_{11}(x)\big)\big(D_{22}(y)-D_{21}(y)-D_{12}(y)+D_{11}(y)\big)\\
 &=\(\sum_{1\leq i\neq j\leq 2}c_i(\mu_{F^2}-\mu_F)F\(\frac{x}{c_j},\frac{x+1}{c_j}\]\)\(\sum_{1\leq i\neq j\leq 2}d_i\(\mu_{G^2}-\mu_G\)G\(\frac{y}{d_j},\frac{y+1}{d_j}\]\)(1+o(1))\\
 &=\sum_{i=1}^2\sum_{j=1}^2 o\(\overline{F}\(\frac{x}{c_i}\)G\(\frac{y}{d_j},\frac{y+1}{d_j}\]+F\(\frac{x}{c_i},\frac{x+1}{c_i}\]\overline{G}\(\frac{y}{d_j}\)\).
\end{align*}
Thus, we have that, uniformly for $(\underline{c_2},\underline{d_2})\in[a,b]^4$,
\begin{align*}
&\quad\sum_{i=1}^4I_{i}(x,y)\\
&=\sum_{i=1}^2\sum_{j=1}^2\Bigg(\overline{F}\(\frac{x}{c_i}\)\overline{G}\(\frac{y}{d_j}\)+\overline{F}\(\frac{x}{c_i}\)\sum_{l\in\Lambda_2^j}d_l\mu_{G}G\(\frac{y}{d_j},\frac{y+1}{d_j}\]\\
&\quad+\overline{G}\(\frac{y}{d_j}\)\sum_{l\in\Lambda_2^i}c_l\mu_{F}F\(\frac{x}{c_i},\frac{x+1}{c_i}\]\Bigg)+r\sum_{i=1}^2\overline{F}\(\frac{x}{c_i}\)\overline{G}\(\frac{y}{d_i}\)\\
&\quad+r\sum_{1\leq i\neq j\leq2}\Bigg(d_i\mu_{G}\overline{F}\(\frac{x}{c_j}\)G\(\frac{y}{d_j},\frac{y+1}{d_j}\]+c_j\mu_{F}\overline{G}\(\frac{y}{d_i}\)F\(\frac{x}{c_i},\frac{x+1}{c_i}\]\nn\\
&\quad\quad+d_i(\mu_{G^2}-\mu_{G})\overline{F}\(\frac{x}{c_i}\)G\(\frac{y}{d_j},\frac{y+1}{d_j}\]+c_j(\mu_{F^2}-\mu_{F})\overline{G}\(\frac{y}{d_j}\)F\(\frac{x}{c_i},\frac{x+1}{c_i}\]\Bigg)\nn\\
&\quad+\sum_{i=1}^2\sum_{j=1}^2 o\(\(\overline{F}\(\frac{x}{c_i}\)G\(\frac{y}{d_j},\frac{y+1}{d_j}\]+F\(\frac{x}{c_i},\frac{x+1}{c_i}\]\overline{G}\(\frac{y}{d_j}\)\)\)\\
&=\sum_{i=1}^2\sum_{j=1}^2\p(c_iX_i>x,d_jY_j>y)\nn\\
&\quad+\sum_{i=1}^2\sum_{j=1}^2\(\sum_{l\in\Lambda_2^j}d_l\E\[Y_l\I_{\{c_iX_i>x,d_jY_j\in(y,y+1]\}}\]+\sum_{l\in\Lambda_2^i}c_l\E\[X_l\I_{\{c_iX_i\in(x,x+1],d_jY_j>y\}}\]\)\nn\\
&\quad+\sum_{i=1}^2\sum_{j=1}^2o\(\(\overline{F}^2\(\frac{x}{c_i}\)G^2\(\frac{y}{d_j},\frac{y+1}{d_j}\]+F^2\(\frac{x}{c_i},\frac{x+1}{c_i}\]\overline{G}^2\(\frac{y}{d_j}\)\)^{1/2}\),
\end{align*}
where in the third step, we used Remark \ref{re:2.1}. Therefore, the proof of Theorem \ref{the:1} with the case of $n,m=2$  ends.

Finally, using methods analogous to those established earlier, we can extend the result to the case of $n=m>2$. In addition, if $n, m> 2$ but $n\neq m$, then, without loss of generality, we can assume that $m>n\geq 2$. Since sets $\{Y_j: j=n+1,\ldots,m\}$ and $\{(X_i,Y_i): 1\leq i\leq n\}$ are independent, the proof of the lemma is complete.
\end{proof}

\begin{lemma}\label{lem:3}
    Let $X$ and $Y$ be two real-value rvs with respective distribution functions $F$ and $G$ satisfying a bivariate FGM distribution \eqref{eq:FGM} with parameter $r\in[-1,1]$. If $F,G \in \widetilde{\mathscr S}_2$ and $F(x,x+1]\asymp G(x,x+1]$, then, for any $ 0 < a < b < \infty $, it holds uniformly for $(c,d)\in[a,b]^{2}$ that
\begin{align}\label{lem:3.1}
        \p\(cX+dY>x\)&=\overline{F}\(\frac{x}{c}\)+\overline{G}\(\frac{x}{d}\)+d(r\mu_{G^2}+(1-r)\mu_G)F\(\frac{x}{c},\frac{x+1}{c}\](1+o(1))\nn\\
     &\quad+c(r\mu_{F^2}+(1-r)\mu_F)G\(\frac{x}{d},\frac{x+1}{d}\](1+o(1)),
\end{align}
and further
\begin{align*}
         \p\(cX+dY\in(x,x+1]\)=\(F\(\frac{x}{c},\frac{x+1}{c}\]+G\(\frac{x}{d},\frac{x+1}{d}\]\)(1+o(1)).
\end{align*}
\end{lemma}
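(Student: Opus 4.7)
The plan is to follow the same FGM-decomposition strategy used in the proof of Lemma~\ref{lem:2}, now specialized to a single pair and focused on the sum $cX+dY$ rather than the joint event. For $r\in(-1,1]$, the signed-mixture representation
$$\Pi(x,y)=(1+r)F(x)G(y)-rF^2(x)G(y)-rF(x)G^2(y)+rF^2(x)G^2(y)$$
lets me introduce four mutually independent rvs $X',X^*,Y',Y^*$ with respective distributions $F,F^2,G,G^2$ and write
\begin{align*}
P(cX+dY>x) &= (1+r)P(cX'+dY'>x) - rP(cX^*+dY'>x) \\
&\quad\quad - rP(cX'+dY^*>x) + rP(cX^*+dY^*>x).
\end{align*}
The $r=-1$ case is handled by the three-term decomposition $\Pi=F^2G+FG^2-F^2G^2$ used at the end of the proof of Lemma~\ref{lem:2}.

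Each of the four (respectively three) resulting probabilities is a sum of two \emph{independent} rvs whose marginals lie in $\widetilde{\mathscr S}_2$; membership of $F^2,G^2$ in $\widetilde{\mathscr S}_2$, together with $\overline{F^2}(x)\sim 2\overline{F}(x)$, $F^2(x,x+1]\sim 2F(x,x+1]$ and the $G$-analogues, follows from Proposition~2.3 of \cite{lin2012secondb} and elementary factoring. To each term I would then apply a non-identically-distributed version of Lemma~\ref{lem:1}, equivalently Lemma~4.13 of \cite{lin2020second}, which yields uniformly for $(c,d)\in[a,b]^2$ an expansion of $P(cZ_1+dZ_2>x)$ of the form $\overline{F_{Z_1}}(x/c)+\overline{F_{Z_2}}(x/d)$ plus two second-order local terms with coefficients $d\mu_{F_{Z_2}}$ and $c\mu_{F_{Z_1}}$. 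The hypothesis $F(x,x+1]\asymp G(x,x+1]$ is exactly what allows the resulting remainders to be consolidated into a single uniform $o\bigl(F(x/c,(x+1)/c]+G(x/d,(x+1)/d]\bigr)$ on $[a,b]^2$.

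Collecting the four pieces with the signed coefficients, the $\overline F$- and $\overline G$-contributions recombine with total weight $1$ each; the coefficient of $d\,F(x/c,(x+1)/c]$ reduces, after cancellation, to
$$(1+r)\mu_G-2r\mu_G-r\mu_{G^2}+2r\mu_{G^2}=(1-r)\mu_G+r\mu_{G^2},$$
and a symmetric computation gives the coefficient $(1-r)\mu_F+r\mu_{F^2}$ of $c\,G(x/d,(x+1)/d]$. A parallel bookkeeping with three signed pieces gives the same coefficients at $r=-1$ (verifying directly $2\mu_G-\mu_{G^2}$ and $2\mu_F-\mu_{F^2}$). This establishes the first asserted identity.

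For the local statement $P(cX+dY\in(x,x+1])$ I would take the difference of the first identity at $x$ and at $x+1$. The $\mathscr L_\Delta$ property inherent in $\widetilde{\mathscr S}_2$ gives $\overline F(x/c)-\overline F((x+1)/c)=F(x/c,(x+1)/c]$ and its $G$-analogue, while the second-order local corrections at the two points differ by $o$ of the local probabilities themselves (using $F(x,x+1]\sim F(x+1,x+2]$ locally), so only the leading local terms survive. The main obstacle is not any single estimate but rather the uniform bookkeeping of the eight second-order contributions from the FGM decomposition, together with verifying that all remainder terms remain uniformly $o(\cdot)$ on $[a,b]^2$ under only the comparatively weak $\asymp$-hypothesis linking the local tails of $F$ and $G$.
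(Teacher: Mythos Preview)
Your proposal is correct and follows essentially the same route as the paper: introduce independent rvs $X',X^*,Y',Y^*$ with laws $F,F^2,G,G^2$, expand $P(cX+dY>x)$ via the signed FGM mixture, apply Lemma~\ref{lem:1} (equivalently Lemma~4.13 of \cite{lin2020second}) to each independent sum using $F(x,x+1]\asymp G(x,x+1]$ to merge remainders, and then subtract at $x$ and $x+1$ using $\mathscr L_\Delta$ for the local statement. The only cosmetic difference is that the paper does not split off $r=-1$ here (the four-term signed identity is valid for all $r\in[-1,1]$, the first term simply vanishing at $r=-1$), so your separate treatment is harmless but unnecessary.
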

\begin{proof} Similar to the proof of Lemma \ref{lem:2}, we first consider $r\in(-1,1]$ and introduce four independent rvs $X',X^*,Y'$ and $Y^*$ with corresponding distribution functions $F,F^2,G$ and $G^2$. According to Lemma \ref{lem:1} and $F(x,x+1]\asymp G(x,x+1]$, it holds uniformly for $(c,d)\in[a,b]^{2}$ that
\begin{align*}
\p\(cX+dY>x\)
     &=(1+r)\int_{\{u+v>x\}}\d F\( \frac{u}{c}\)\d G\(\frac{v}{d}\)-r\int_{\{u+v>x\}}\d F^2\( \frac{u}{c}\)\d G\(\frac{v}{d}\)\nn\\
     &\quad-r\int_{\{u+v>x\}}\d F^2\( \frac{u}{c}\)\d G\(\frac{v}{d}\)+r\d F^2\( \frac{u}{c}\)\d G^2\(\frac{v}{d}\)\nn\\
     &=(1+r)\p(cX'+dY'>x)-r\p(cX^*+dY'>x)\nn\\
     &\quad-r\p(cX'+dY^*>x)+r\p(cX^*+dY^*>x)\nn\\
     \quad\quad\quad\quad &=\overline{F}\(\frac{x}{c}\)+\overline{G}\(\frac{x}{d}\)+d(r\mu_{G^2}+(1-r)\mu_G)F\(\frac{x}{c},\frac{x+1}{c}\](1+o(1))\nn\\
     &\quad+c(r\mu_{F^2}+(1-r)\mu_F)G\(\frac{x}{d},\frac{x+1}{d}\](1+o(1)).
\end{align*}
Next, due to \eqref{lem:3.1} and  $F,G \in \widetilde{\mathscr S}_2 \subseteq \mathscr{L}_{\Delta} $, it holds uniformly for $(c,d)\in[a,b]^{2}$ that,
\begin{align*}
    \p\(cX+dY\in(x,x+1]\)&=\p\(cX+dY>x\)-\p\(cX+dY>x+1\)\nn\\
    &=\(F\(\frac{x}{c},\frac{x+1}{c}\]+G\(\frac{x}{d},\frac{x+1}{d}\]\)(1+o(1)).
\end{align*}
 For the case of $r=-1$, we can follow the proof idea of Lemma \ref{lem:2} to obtain the desired result. Thus, we complete this proof of the lemma.
\end{proof}

\begin{lemma}\label{lem:4}
    Under the same conditions of Lemma \ref{lem:2}, if we further assume that  $F(x,x+1]\asymp G(x,x+1]$, then for any fixed $n,m\in\N$ and any  $ 0 < a < b < \infty $, it holds uniformly for $(\underline{c_n},\underline{d_m})\in[a,b]^{n+m}$ that
    \begin{align}\label{eq:lem4.1}
        \p\(S_n^c+T_m^d>x\)&=\sum_{i=1}^n\overline{F}\(\frac{x}{c_i}\)+\sum_{1\leq i\neq l\leq n }c_i\mu_FF\(\frac{x}{c_l},\frac{x+1}{c_l}\]+\sum_{1\leq j\neq l\leq m }d_j\mu_GG\(\frac{x}{d_l},\frac{x+1}{d_l}\]\nn\\
        &\quad+\sum_{j=1}^m\overline{G}\(\frac{x}{d_j}\)+\sum_{i=1}^n\sum_{j=1}^m\(c_i\mu_FG\(\frac{x}{d_j},\frac{x+1}{d_j}\]+d_j\mu_GF\(\frac{x}{c_i},\frac{x+1}{c_i}\]\)\nn\\
        &\quad+r\sum_{i=1}^{n\wedge m}\(c_i(\mu_{F^2}-\mu_F)G\(\frac{x}{d_i},\frac{x+1}{d_i}\]+d_i(\mu_{G^2}-\mu_G)F\(\frac{x}{c_i},\frac{x+1}{c_i}\)\)\nn\\
        &\quad+o\(\sum_{i=1}^nF\(\frac{x}{c_i},\frac{x+1}{c_i}\]+\sum_{j=1}^mG\(\frac{x}{d_j},\frac{x+1}{d_j}\]\).
    \end{align}
\end{lemma}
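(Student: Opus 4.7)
The plan is to write $S_n^c + T_m^d$ as a sum of mutually independent random variables and then apply Lemma \ref{lem:1} inductively. Since $\{(X_i,Y_i)\}$ are i.i.d., the grouping
\begin{align*}
S_n^c + T_m^d = \sum_{i=1}^{n\wedge m}\(c_iX_i+d_iY_i\) + \sum_{i=n\wedge m+1}^{n}c_iX_i + \sum_{j=n\wedge m+1}^{m}d_jY_j
\end{align*}
exhibits the left-hand side as a sum of $n\vee m$ independent summands: the first $n\wedge m$ are FGM pairs whose tail and local tail are both supplied by Lemma \ref{lem:3}, and the remaining ones are single scaled rvs with tails $\overline F(x/c_i)$ or $\overline G(x/d_j)$. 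The hypothesis $F(x,x+1]\asymp G(x,x+1]$, together with the local-tail identity in Lemma \ref{lem:3}, guarantees that all summand local tails are of the same order, so the ratio-type condition required by Lemma \ref{lem:1} can be verified at every stage.

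Next, I would induct on the number of independent summands $n\vee m$. The base case of a single FGM pair is exactly Lemma \ref{lem:3}; the case of a single scaled rv is trivial. For the induction step, combine the current partial sum with one additional independent summand via Lemma \ref{lem:1}: since the tail computed at the previous stage has already been expressed as a linear combination of terms of the form $\overline F(x/c_i)$ and $\overline G(x/d_j)$ up to local-tail corrections, one can identify the constants $K$ and $A$ appearing in Lemma \ref{lem:1} and thereby read off the new second-order expansion. Uniformity over $(\underline{c_n},\underline{d_m})\in[a,b]^{n+m}$ is inherited directly from the uniform statements of Lemmas \ref{lem:1} and \ref{lem:3}.

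The main obstacle is the bookkeeping. After $n\vee m - 1$ iterations of Lemma \ref{lem:1}, one must verify that the accumulated second-order terms reorganize into exactly the three families appearing in \eqref{eq:lem4.1}: the intra-$X$ terms $\sum_{1\leq i\neq l\leq n}c_i\mu_F F\(x/c_l,(x+1)/c_l\]$ together with the intra-$Y$ analogue, the cross terms $\sum_{i,j}\(c_i\mu_F G\(x/d_j,(x+1)/d_j\] + d_j\mu_G F\(x/c_i,(x+1)/c_i\]\)$, and the $r$-dependent corrections with coefficients $\mu_{F^2}-\mu_F$ and $\mu_{G^2}-\mu_G$. This last family must arise only from the $n\wedge m$ FGM-paired contributions delivered by Lemma \ref{lem:3}, which explains why the corresponding sum in \eqref{eq:lem4.1} ranges only over $i=1,\ldots,n\wedge m$. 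Checking that no extraneous terms appear and that the cumulative error is absorbed into $o\(\sum_i F\(x/c_i,(x+1)/c_i\] + \sum_j G\(x/d_j,(x+1)/d_j\]\)$ is then routine, since every error produced by Lemma \ref{lem:1} is of local-tail order, uniformly in the weights.
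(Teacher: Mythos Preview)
Your overall strategy---grouping $S_n^c+T_m^d$ into the $n\wedge m$ FGM pairs $c_iX_i+d_iY_i$ plus the leftover unpaired summands, and then inducting on the number of independent blocks---is exactly the decomposition the paper uses. The gap is in the induction step: you cannot invoke Lemma \ref{lem:1} as a black box.

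Lemma \ref{lem:1} requires the ratio condition
\[
\lim_{x\to\infty}\frac{\overline{F_2}(x)-K\overline{F_1}(x)}{F_1(x,x+1]}=A
\]
on the \emph{unscaled} distributions. At the $k$th stage your partial sum $W$ has tail $\sum_i\overline F(x/c_i)+\sum_j\overline G(x/d_j)+O(\text{local tail})$ with the weights already baked in, and the new summand has tail $\overline F(x/c)$, say. Verifying the ratio condition would force $\overline F(x/c_i)-K\overline F(x/c)=O\bigl(F(x/c,(x+1)/c]\bigr)$, which amounts to controlling $\overline F(x/c_i)/\overline F(x/c)$. For general $F\in\widetilde{\mathscr S}_2$ this ratio need not converge at all (take $F$ Weibull with shape in $(0,1)$: $\overline F(x/c)/\overline F(x)=\exp\bigl(x^\beta(1-c^{-\beta})\bigr)$ tends to $0$ or $\infty$ according as $c<1$ or $c>1$). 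So the constants $K,A$ you propose to ``identify'' simply do not exist, and Lemma \ref{lem:1} cannot be applied to the pair (partial sum, new summand).

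The paper sidesteps this by \emph{not} using Lemma \ref{lem:1} for the recursion. Instead, at each step it writes the convolution integral directly, e.g.\ for $n=1,m=2$,
\[
P(Z_1+d_2Y_2>x)=P(Z_1>x)+\int_{-\infty}^{\infty}\bigl(P(Z_1>x-t)-P(Z_1>x)\bigr)P(d_2Y_2\in\d t),
\]
and splits the integral over $(-\infty,w]\cup(w,x-w]\cup(x-w,\infty)$. The point is that this calculation only needs the \emph{asymptotic formula} for $\overline{F_{Z_1}}$ and its local increment (supplied by Lemma \ref{lem:3}), never a ratio hypothesis. Your induction will go through if you replace ``apply Lemma \ref{lem:1}'' by ``repeat the three-piece convolution estimate, feeding in the second-order expansion already obtained for the partial sum.'' The bookkeeping you describe is then correct.
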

\begin{proof}
First,  for notational convenience, denote $Z_1:=c_1X_1+d_1Y_1$. Due to $\mu_F,\mu_G<\infty$, we have $\E[Z_1]<\infty$. It follows from Lemma \ref{lem:3} that
 \begin{align*}
      \p\(Z_1\in(x,x+1]\)
     = \(F\(\frac{x}{c_1},\frac{x+1}{c_1}\]+G\(\frac{x}{d_1},\frac{x+1}{d_1}\]\)(1+o(1)).
 \end{align*}
 {Based on $F(x,x+1]\asymp G(x,x+1]$, we have
$$\p\(Z_1\in(x,x+1]\)\asymp G\(\frac{x}{c_1},\frac{x+1}{c_1}\]+G\(\frac{x}{d_1},\frac{x+1}{d_1}\].$$
It is obvious that the relation \eqref{eq:lem4.1} holds for $n=m=1$. Now we shall verify the correctness of the relation \eqref{eq:lem4.1} with $n=1$ and $m=2$. Due to the fact that $Z_1$ and $Y_2$ are independent and the same lines as the proof of Lemma \ref{lem:1} with some straightforward 
modifications,  it holds uniformly for $(c_1,d_1,d_2)\in[a,b]^3$ that
\begin{align*}
    \p\(S_1^c+T_2^d>x\)
    &=\p(Z_1>x)+\p(d_2Y_2>x)+d_2\mu_{G}\p(Z_1\in(x,x+1])\nn\\
&\quad +\E[Z_1]\p(d_2Y_2\in(x,x+1])+o\big(\p(Z_1\in(x,x+1])+\p(d_2Y_2\in(x,x+1])\big).
\end{align*}}
Next, it follows from Lemma \ref{lem:3} that the relation \eqref{eq:lem4.1} holds for $n=1$ and $m=2$. Finally, for general situations, without loss of generality, we can use the same method used in the proof of Lemma \ref{lem:2} as well as the recursive method to obtain the desired result.
\end{proof}

\begin{proof}[Proofs of Theorems \ref{the:1} and \ref{the:2}] Noticing that $\{(X_i, Y_i): i\in \N\}$ and $\{\xi_i,\eta_i: i\in \N\}$ are mutually independent and conditioning on $\xi_i=c_i,\;\eta_i=d_i,\;i\in \N$, {by Lemmas \ref{lem:2} and \ref{lem:4},} we can directly obtain Theorems \ref{the:1} and \ref{the:2}.
\end{proof}

\begin{proof}[Proofs of Corollaries \ref{cor:1} and \ref{cor:2}]
According to Lemma 4.7 of \cite{lin2020second}, it follows that $F(x,x+t]\sim tF(x,x+1]$ holds locally uniformly for $t \in (0, \infty)$ (i.e., it holds uniformly for $t \in [0, T]$ for any $T \in (0, \infty)$; see, e.g., \cite{li2010subexponential}). In view of Theorem 3 in \cite{kluppelberg1989estimation}, we have $F(x,x+1]= f(x)(1+o(1))$. Therefore, if $f\in\RV_{-(\alpha+1)}$, then, for all $i=1,\dots,n,$
\begin{align}\label{eq:3.7}
    \p(\xi_i X_i\in(x,x+1])
    &=\int_a^bF\(\frac{x}{t},\frac{x}{t}+\frac{1}{t}\]\p(\xi_i\in\d t)\nn\\
     &=\int_a^b \frac{1}{t}f\(\frac{x}{t}\)\p(\xi_i\in\d t)(1+o(1))\nn\\
     &=\E[\xi_i^\alpha]f(x)(1+o(1)),
\end{align}
Similarly, according to $g\in\RV_{-(\beta+1)}$, it follows that $\p(\eta_j Y_j\in(y,y+1])=\E[\eta_j^\beta]g(y)(1+o(1))$ 
for all $j=1,\dots,m$. 
We introduce four independent rvs $X',X^*,Y'$ and $Y^*$ with corresponding distribution functions $F,F^2,G$ and $G^2$. 
Furthermore, for all $i\neq j$, by \eqref{eq:3.7}, we have
\begin{align}\label{eq:3.9}
     &\quad \E\[\eta_iY_i\I_{\{\xi_iX_i>x,\eta_jY_j\in(y,y+1]\}}\]\nn\\
    &=(1+r)\E\[\eta_iY'\I_{\{\xi_iX'>x,\eta_jY_j\in(y,y+1]\}}\]-r\E\[\eta_iY^*\I_{\{\xi_iX'>x,\eta_jY_j\in(y,y+1]\}}\]\nn\\
    &\quad-r\E\[\eta_iY'\I_{\{\xi_iX^*>x,\eta_jY_j\in(y,y+1]\}}\]+r\E\[\eta_iY^*\I_{\{\xi_iX^*>x,\eta_jY_j\in(y,y+1]\}}\]\nn\\
    &=\(\mu_G+r(\mu_{G^2}-\mu_G)\)\E\[\eta_i\eta_j^\beta\I_{\{\xi_iX_i>x\}}\]g(y)(1+o(1)),
\end{align}
and 
\begin{align}\label{eq:3.10}
     &\quad\E\[\eta_jY_j\I_{\{\xi_iX_i>x,\eta_iY_i\in(y,y+1]\}}\]\nn\\
     &=(1+r)\E\[\eta_jY_j\I_{\{\xi_iX'>x,\eta_iY'\in(y,y+1]\}}\]-r\E\[\eta_jY_j\I_{\{\xi_iX^*>x,\eta_iY'\in(y,y+1]\}}\]\nn\\
     &\quad-r\E\[\eta_jY_j\I_{\{\xi_iX'>x,\eta_iY^*\in(y,y+1]\}}\]+r\E\[\eta_jY_j\I_{\{\xi_iX^*>x,\eta_iY^*\in(y,y+1]\}}\]\nn\\
     &=(1+r)\mu_G\E\[\eta_j\eta_i^\beta\I_{\{\xi_iX_i>x\}}\]g(y)(1+o(1)).
\end{align}
Therefore, by \eqref{eq:3.9} and \eqref{eq:3.10}, it follows that
\begin{align*}
&\quad\sum_{i=1}^n\sum_{j=1}^m\sum_{l\in\Lambda_m^j}\E\[\eta_lY_l\I_{\{\xi_iX_i>x,\eta_jY_j\in(y,y+1]\}}\]\nn\\
&=\sum_{i=1}^n\(\sum_{j\in\Lambda_m^{i}}\(\sum_{l\in\Lambda_m^{j,i}}+\sum_{l=i}^{m}\)+\sum_{j=i}^m\sum_{l\in\Lambda_m^{i}}\)\E\[\eta_lY_l\I_{\{\xi_iX_i>x,\eta_jY_j\in(y,y+1]\}}\]\nn\\
&=\(\sum_{i=1}^n\sum_{j\in\Lambda_m^{i}}\sum_{l\in\Lambda_m^{j,i}}\mu_G\E\[\eta_l\eta_j^\beta\I_{\{\xi_iX_i>x\}}\]+\sum_{i=1}^{n\wedge m}\sum_{j\in\Lambda_m^{i}}\(\mu_G+r(\mu_{G^2}-\mu_G)\)\E\[\eta_i\eta_j^\beta\I_{\{\xi_iX_i>x\}}\]\right.\nn\\
&\quad\left.+\sum_{i=1}^{n\wedge m}\sum_{j\in\Lambda_m^{i}}(1+r)\mu_G\E\[\eta_j\eta_i^\beta\I_{\{\xi_iX_i>x\}}\]\)g(y)(1+o(1))\nn\\
&=\Bigg(\sum_{i=1}^n\sum_{j=1}^m\sum_{l\in\Lambda_m^j}\mu_G\E\[\eta_l\eta_j^\beta\I_{\{\xi_iX_i>x\}}\]+r\sum_{i=1}^{n\wedge m}\sum_{j\in\Lambda_m^{i}}\(\mu_G\E\[\eta_j\eta_i^\beta\I_{\{\xi_iX_i>x\}}\]\right.\nn\\
&\quad\left.+(\mu_{G^2}-\mu_G)\E\[\eta_i\eta_j^\beta\I_{\{\xi_iX_i>x\}}\]\)\Bigg) g(y)(1+o(1)).
\end{align*}
 By the same arguments as above, we can also conclude that
\begin{align*}
&\quad\sum_{i=1}^n\sum_{j=1}^m\sum_{l\in\Lambda_n^i}\E\[\xi_lX_l\I_{\{\xi_iX_i\in(x,x+1],\eta_jY_j>y\}}\]\nn\\
    &=\Bigg(\sum_{i=1}^n\sum_{j=1}^m\sum_{l\in\Lambda_n^i}\mu_F\E\[\xi_l\xi_i^\alpha\I_{\{\eta_jY_j>y\}}\]+r\sum_{j=1}^{n\wedge m}\sum_{i\in\Lambda_n^j}\bigg(\mu_F\E\[\xi_i\xi_j^\alpha\I_{\{\eta_jY_j>y\}}\]\nn\\
    &\quad\quad +(\mu_{F^2}-\mu_F)\E\[\xi_j\xi_i^\alpha\I_{\{\eta_jY_j>y\}}\] \bigg)\Bigg) f(x)(1+o(1)).  
\end{align*}
In addition, by Karamata’s theorem, we have $\overline{F}(x)=\frac{1}{\alpha}xf(x)(1+o(1))$ and 
\begin{align*}
     &\quad\p\(\xi_i X_i>x,\eta_jY_j\in(y,y+1]\)\nn\\
     &=\int_{a_1}^{b_1}\int_{a_2}^{b_2}\p\(X_i>\frac{x}{t},Y_j\in\(\frac{y}{s},\frac{y+1}{s}\]\)\p\(\xi_i\in \d t,\eta_j\in \d s\)\nn\\
     &\asymp\int_{a_1}^{b_1}\int_{a_2}^{b_2}\overline{F}\(\frac{x}{t}\)\overline{G}\(\frac{y}{s},\frac{y+1}{s}\]\p\(\xi_i\in \d t,\eta_j\in \d s\)\nn\\
     &=\int_{a_1}^{b_1}\int_{a_2}^{b_2}\frac{x}{\alpha t}f\(\frac{x}{t}\)g\(\frac{y}{s}\)(1+o(1))\p\(\xi_i\in \d t,\eta_j\in \d s\)\nn\\
    &=\frac{1}{\alpha}xf(x)g(y)\int_{a_1}^{b_1}\int_{a_2}^{b_2} t^{\alpha}s^{\beta+1} \p\(\xi_i\in \d t,\eta_j\in \d s\)(1+o(1))\nn\\
     &=\frac{1}{\alpha}\E\[\xi_i^\alpha\eta_j^{\beta+1}\]xf(x)g(y)(1+o(1))\nn\\
     &\asymp xf(x)g(y)(1+o(1)).
\end{align*}
Similarly, by the simple fact that $x+y\asymp \sqrt{x^2+y^2}$ as $(x,y)\rightarrow(\infty,\infty)$, we have 
\begin{align*}
 xf(x)g(y)+yf(x)g(y)=(x+y)f(x)g(y)\asymp\sqrt{x^2+y^2}f(x)g(y).
\end{align*}
Thus, by Theorem \ref{the:1}, we can directly complete the proof of Corollary \ref{cor:1}. Furthermore, according to Theorem \ref{the:2} and $f,g\in \RV_{-(\alpha+1)}$, we can easily obtain Corollary \ref{cor:2}.
\end{proof}

\setcounter{equation}{0}\par
\section{Application and simulation}\label{sec:3}
 In this section, we first focus on a bidimensional discrete-time risk model and obtain two kinds of tail probabilities. Then we conduct some simulations to check the accuracy of the approximation for the joint tail probability and the sum tail probability of the randomly weighted sum.

\subsection{An application in a  bidimensional  discrete-time risk model}\label{sec:application}

 According to Theorems \ref{the:1} and \ref{the:2}, we can easily establish the second-order expansion for the joint tail probability and the sum tail probability of the stochastic discounted value of aggregate net losses. 


\begin{proposition}\label{the:3.1}
    Consider the bidimensional discrete-time risk model introduced in \eqref{eq:U}, in which  $\{(X,Y),(X_i,Y_i):i\in\N\}$ follow a common iid bivariate FGM distribution \eqref{eq:FGM} with $r\in[-1,1]$. Assume that $F$ is the marginal distribution of $\{X_i: i\in \N\}$ and
 $G$ is the marginal distribution of $\{Y_i: i\in \N\}$. Assume that $\{(X,Y),(X_i,Y_i): i\in \N\}$ and $\{(R_i,\widetilde{R}_i): i\in \N\}$ are mutually independent.  If ${F,G} \in {\widetilde{\mathscr S}_2}$, then for fixed $n\in\N$, it holds that
\begin{align*}
     D_{\text{and}}(x,y;n)
    &=\sum_{i=1}^n\sum_{j=1}^n\p\(X_i\prod\limits_{k=1}^iR_k>x,Y_j\prod\limits_{k=1}^j\widetilde{R}_k>y\)\nn\\
    &\quad+\sum_{i=1}^n\sum_{j=1}^n\(\sum_{l\in\Lambda_n^j}\E\[Y_l\prod\limits_{k=1}^l\widetilde{R}_k\I_{\left\{X_i\prod\limits_{k=1}^iR_k>x,Y_j\prod\limits_{k=1}^j\widetilde{R}_k\in(y,y+1]\right\}}\]\right.\nn\\
&\quad\quad\left.+\sum_{l\in\Lambda_n^i}\E\[X_l\prod\limits_{k=1}^lR_k\I_{\left\{X_i\prod\limits_{k=1}^iR_k\in(x,x+1],Y_j\prod\limits_{k=1}^j\widetilde{R}_k>y\right\}}\]\)\nn\\
&\quad+ {\sum_{i=1}^n\sum_{j=1}^no\(\(\p^2\(X_i\prod\limits_{k=1}^iR_k\in(x,x+1],Y_j\prod\limits_{k=1}^j\widetilde{R}_k>y\)\right.\right.}\nn\\
&\quad\quad {\left.\left.+\p^2\(X_i\prod\limits_{k=1}^iR_k>x,Y_j\prod\limits_{k=1}^j\widetilde{R}_k\in(y,y+1]\)\)^{1/2}\)}.
\end{align*}
\end{proposition}

 If we reduce Proposition \ref{the:3.1} to a unidimensional case, one can easily find that the result obtained in this paper is in accordance with the one in \cite{yang2022seconda}, although they considered a unidimensional discrete-time risk model and assumed that the insurance risk and the financial risk follow an FGM distribution.

Now we establish the second-order asymptotics for the sum tail probability.

\begin{proposition}\label{the:3.2}
    Under the conditions of Proposition \ref{the:3.1}, if we further assume that $F(x,x+1]\asymp G(x,x+1]$ and set $z=x+y$, then
     \begin{align*}
   &\quad D_{\text{sum}}(x,y;n)\nn\\
   &=\sum_{i=1}^n\p\(X_i\prod\limits_{k=1}^iR_k>z\)
 +\mu_F\sum_{1\leq i\neq l\leq n }\E\[\prod\limits_{k=1}^iR_k\I_{\left\{X_l\prod\limits_{k=1}^lR_k
    \in(z,z+1]\right\}}\]\nn\\
    &\quad +\sum_{j=1}^n\p\(Y_j\prod\limits_{k=1}^j\widetilde{R}_k>z\)+\mu_G\sum_{1\leq j\neq l\leq n }\E\[\prod\limits_{k=1}^j\widetilde{R}_k\I_{\left\{Y_l\prod\limits_{k=1}^l\widetilde{R}_k
    \in(z,z+1]\right\}}\]\nn\\
        &\quad+\sum_{i=1}^n\sum_{j=1}^n\(\E\[X_i\prod\limits_{k=1}^iR_k\I_{\left\{Y_j\prod\limits_{k=1}^j\widetilde{R}_k
    \in(z,z+1]\right\}}\]+\E\[Y_j\prod\limits_{k=1}^j\widetilde{R}_k\I_{\left\{X_i\prod\limits_{k=1}^iR_k
    \in(z,z+1]\right\}}\]\)\nn\\
    &\quad+o\(\sum_{i=1}^n\p\(X_i\prod_{k=1}^iR_k
    \in(z,z+1]\)+\sum_{j=1}^n\p\(Y_j\prod_{k=1}^j\widetilde{R}_k
    \in(z,z+1]\)\).
\end{align*}
 \end{proposition}

\subsection{ Simulation study}\label{sec:simulation}
 In this subsection, our aim is to check the precision of the approximation for the joint tail probability and the sum tail probability of the randomly weighted sums in Corollaries \ref{the:1} and \ref{the:2} with any fixed $m,n\in \N$ via the crude Monte Carlo method.  We will illustrate the improvement of our second-order results compared to the first-order asymptotics for $\p(S_m^\xi>x,T_n^\eta>y)$ and $\p(S_m^\xi+T_n^\eta>x+y)$.

Firstly, the generic random pair $(X,Y)$ is linked through a bivariate FGM distribution of the form \eqref{eq:FGM} with $r\in[-1,1]$. Suppose that $n=m=2$. Let the primary random variables $X_1,X_2$ and $Y_1,Y_2$, respectively, follow the two Pareto distributions
$$F(x)=1-\(\frac{k_1}{x+k_1}\)^{\alpha_1}~~~~\mbox{and}~~~~G(x)=1-\(\frac{k_2}{x+k_2}\)^{\alpha_2}~~\mbox{with}~~ \alpha_1,\alpha_2>1,$$
{with density functions $f(x)$ and $g(x)$}.
For random weights $\xi_1,\xi_2$ and $\eta_1,\eta_2$, let $\{\xi,\xi_i:i=1,2\}$ and $\{\eta,\eta_i:i=1,2\}$ be two sequences of uniform i.i.d. rvs on $[a_1,b_1]$ and $[a_2,b_2]$, respectively. 

The calculation procedure for the simulated value of the joint tail probability $\p(S_2^\xi>x,T_2^\eta>y)$ is listed here. We first generate two independent copies of $(X,Y)$ with a simple size $N$ denoted by $(X_i^{(k)},Y_i^{(k)})$, $i=1,2$, and $k=1,\ldots,N$. Then, for each sample $k=1,\ldots,N$, we define
 $$   S_2^{\xi,(k)}=\sum_{i=1}^2\xi_i^{(k)}X_i^{(k)},~~~~\mbox{and}~~~~ 
T_2^{\eta,(k)}=\sum_{j=1}^2\eta_j^{(k)}Y_j^{(k)}.
$$
In this way, the probability of the joint tail $\p(S_2^\xi>x,T_2^\eta>y)$ can be estimated by
\begin{align*}
    \text{Sim}(x,y)=\frac{1}{N}\sum_{k=1}^N\I_{\{S_2^{\xi,(k)}>x,T_2^{\eta,(k)}>y\}}.
\end{align*}
For the asymptotic value of $\p(S_2^\xi>x,T_2^\eta>y)$, we consider the first- and second-order ones, denoted by $\text{Asy}^{(1)}(x,y)$ and $\text{Asy}^{(2)}(x,y)$, respectively. By Corollary \ref{cor:1}, the first-order asymptotic value $\text{Asy}^{(1)}(x,y)$, equal to the first term on the right-hand side
of \eqref{eq:cor1}, is given by
\begin{align*}
    \text{Asy}^{(1)}(x,y)=\sum_{i=1}^2\sum_{j=1}^2\p(\xi_iX_i>x,\eta_jY_j>y),
\end{align*}
and the second-order asymptotic value Asy$^{(2)}(x,y)$, equal to the sum of the first two terms on the right-hand side of \eqref{eq:cor1}, is given by
\begin{align*}
    \text{Asy}^{(2)}(x,y)&= \text{Asy}^{(1)}(x,y)+\(4\mu_G+2r\mu_{G^2}\)\E\[\eta^{\beta+1}\]\p(\xi X>x)g(y)\nn\\
    &\quad+\(4\mu_F+2r\mu_{F^2}\)\E\[\xi^{\alpha+1}\]\p(\eta Y>y)f(x).
\end{align*}

\begin{table}[t]%
  \centering
  \caption{Accuracy of Corollary \ref{the:1} in the case of Pareto distributions with $\alpha_1=2.01$, $k_1=2$, $\alpha_2=2.2$, $k_2=4$, $a_1=a_2=1$, $b_1=b_2=2$ and  $r=0.5$.}
  \label{tab:1}
  \setlength{\tabcolsep}{3.0mm}{
  \begin{tabular}{cccc cc}%
  \hline\hline\noalign{\smallskip}
  $(x,y)$ & Sim$(x,y)$& Asy$^{(1)}(x,y)$& Asy$^{(2)}(x,y)$& $\frac{\text{Asy}^{(1)}(x,y)}{\text{Sim}(x,y)}$ &$\frac{\text{Asy}^{(2)}(x,y)}{\text{Sim}(x,y)}$ \\
\noalign{\smallskip}\hline\noalign{\smallskip}
 $(20,25)$  &   $3.12\times10^{-4}$ &  $2.24\times10^{-4}$& $2.91\times10^{-4}$ &  $0.7179$& $0.9327$\\
 $(25,30)$&   $1.45\times10^{-4}$&   $1.07\times10^{-4}$ & $1.36\times10^{-4}$& $0.7379$& $0.9379$\\
 $(30,35)$&   $7.42\times10^{-5}$&   $5.66\times10^{-5}$& $6.98\times10^{-5}$& $0.7628$ &$0.9407$\\
 $(35,40)$& $4.22\times10^{-5}$&   $3.32\times10^{-5}$& $4.00\times10^{-5}$& $0.7867$ &$0.9479$\\
  $(40,45)$& $2.60\times10^{-5}$ & $2.09\times10^{-5}$& $2.49\times10^{-5}$  &$0.8038$& $0.9578$\\
  $(45,50)$& $1.60\times10^{-5}$ & $1.30\times10^{-5}$& $1.52\times10^{-5}$  &$0.8125$& $0.9500$\\ 
$(50,55)$&$1.05\times10^{-5}$ & $8.63\times10^{-6}$ & $1.01\times10^{-5}$& $0.8219$& $0.9619$\\
$(55,60)$&$7.35\times10^{-6}$& $6.42\times10^{-6}$& $7.34\times10^{-6}$& $0.8735$& $0.9986$\\
  \noalign{\smallskip}\hline
  \end{tabular}}
\end{table}

Set the sample size $N = 10^7$ and repeat 10 times. Table \ref{tab:1} summarizes the simulated value $\text{Sim}(x,y)$, the first-order asymptotic value $\text{Asy}^{(1)}(x,y)$ and the second-order asymptotic value $\text{Asy}^{(2)}(x,y)$ of the joint tail probability $\p(S_2^\xi>x,T_2^\eta>y)$, as well as the corresponding ratios with respect to different $x$ and $y$. It can be seen from the table that the simulated values and the two asymptotic values are closer and gradually decrease with respect to $x$ and $y$. This indicates that, compared with the first-order asymptotics, the second-order asymptotic values are more precise, which is also shown by the ratios between the asymptotic and simulated values.

Similarly, the sum probability of the tail $\p(S_2^\xi+T_2^\eta>x)$ can be estimated by
\begin{align*}
    \text{Sim}(x)=\frac{1}{N}\sum_{k=1}^N\I_{\{S_2^{\xi,(k)}+T_2^{\eta,(k)}>x\}}.
\end{align*}
For the asymptotic value of $\p(S_2^\xi+T_2^\eta>x)$, we consider the first- and second-order ones, denoted by $\text{Asy}^{(1)}(x)$ and $\text{Asy}^{(2)}(x)$, respectively. By Corollary \ref{cor:2}, the first-order asymptotic value $\text{Asy}^{(1)}(x)$, equal to the first term on the right-hand side
of \eqref{eq:cor2}, is given by
\begin{align*}
    \text{Asy}^{(1)}(x)=\sum_{i=1}^2\sum_{j=1}^2\p(\xi_iX_i+\eta_jY_j>x),
\end{align*}
and the second-order asymptotic value Asy$^{(2)}(x)$, equal to the sum of the first two terms on the right-hand side of \eqref{eq:cor2}, is given by
\begin{align*}
    \text{Asy}^{(2)}(x)&=\text{Asy}^{(1)}(x)+2\mu_F\E\[\xi^{\alpha+1}\]f(x)+2\mu_G\E\[\eta^{\alpha+1}\]g(x) \nn\\
&\quad+\(4\mu_F-2r(\mu_{F^2}-\mu_F)\)\E\[\xi\eta^\alpha\]g(x)+\(4\mu_G-2r(\mu_{G^2}-\mu_G)\)\E\[\xi^\alpha\eta\]f(x).
\end{align*}

\begin{table}[t]%
  \centering
  \caption{Accuracy of Corollary \ref{the:2} in the case of Pareto distributions with $\alpha_1=\alpha_2=2.01$, $k_1=k_2=1$, $a_1=a_2=1$, $b_1=b_2=2$ and $r=0.6$.} 
  \label{tab:2}
  \setlength{\tabcolsep}{4.0mm}{
  \begin{tabular}{cccc cc}%
  \hline\hline\noalign{\smallskip}
  $x$ & Sim$(x)$& Asy$^{(1)}(x)$& Asy$^{(2)}(x)$& $\frac{\text{Asy}^{(1)}(x)}{\text{Sim}(x)}$ &$\frac{\text{Asy}^{(2)}(x)}{\text{Sim}(x)}$ \\
\noalign{\smallskip}\hline\noalign{\smallskip}
 $10$  &   $3.89\times10^{-2}$ &  $2.62\times10^{-2}$& $3.65\times10^{-2}$ &  $0.6735$& $0.9383$\\
 $20$&   $9.30\times10^{-3}$&   $7.31\times10^{-3}$ & $8.91\times10^{-3}$& $0.7860$& $0.9581$\\
 $30$&   $4.02\times10^{-3}$&   $3.41\times10^{-3}$& $3.93\times10^{-3}$& $0.8483$ &$0.9776$\\
  $40$& $2.20\times10^{-3}$ & $1.94\times10^{-3}$& $2.16\times10^{-3}$  &$0.8818$& $0.9818$\\
  $50$& $1.40\times10^{-3}$ & $1.27\times10^{-3}$& $1.39\times10^{-3}$  &$0.9071$& $0.9929$\\ 
$60$&$9.48\times10^{-4}$ & $8.70\times10^{-4}$ & $9.42\times10^{-4}$& $0.9177$& $0.9937$\\
$70$&$6.82\times10^{-4}$ & $6.36\times10^{-4}$ & $6.83\times10^{-4}$& $0.9323$& $1.001$\\
$80$&$5.40\times10^{-4}$ & $5.10\times10^{-4}$ & $5.40\times10^{-4}$& $0.9444$& $1.000$\\
  \noalign{\smallskip}\hline
  \end{tabular}}
\end{table}
From Table \ref{tab:2}, we again verify that the second-order asymptotics are much more accurate than the first-order ones for the sum tail probability.


\section{Conclusion}\label{sec:conc}
This paper first establishes the second-order asymptotic formulas for the joint and sum tail probabilities of randomly weighted sums under the assumption that the primary rvs satisfy a commonly bivariate FGM distribution with second-order subexponential marginal tails and the random weights are arbitrarily dependent, but independent of the primary rvs. Based on these results,  some asymptotic estimations for the joint and sum tail probabilities for stochastic discounted aggregate net losses of a bidimensional discrete-time risk model are further derived. 			

In addition, as an initial attempt to explore the second-order asymptotic theory of randomly weighted sums with a second-order subexponential tail, our results can be extended to various models for second-order asymptotics: continuous-time bidimensional risk models, multidimensional continuous-time or discrete-time risk models and a wide class of risk measure models, including, e.g., the Joint Expected Shortfall (JES) (\cite{ji2021tail}) and the Joint Marginal Expected Shortfall (JMES) (\cite{pu2024joint}). The generalized risk measures (GRM) framework proposed in \cite{fadina2024framework} is also of interest for further investigation. 
 
\vspace{6mm}
\noindent{\normalsize \bf Acknowledgements} \vskip0.1in\parskip=0mm

\noindent  
The authors would like to thank the editors, two anonymous referees and Zhichen Wang for their valuable comments and insightful suggestions that help us greatly improve the paper. Geng's work is supported by the Provincial Natural Science Research Project of Anhui Colleges (2024AH050037) and the doctoral research initiation fund of Anhui University (s020318033/015). Liu acknowledges financial support from the National Natural Science Foundation of China (Grant No. 12401624), Guangdong Science and Technology Program (Grant No. 2024QN11X076), Shenzhen Science and Technology Program (Grant Nos. RCBS20231211090814028, JCYJ2025060 4141203005, 2025TC0010) and The Chinese University of Hong Kong (Shenzhen) University Development Fund (Grant No. UDF01003336) and is partly supported by the Guangdong Provincial Key Laboratory of Mathematical Foundations for Artificial Intelligence (Grant No. 2023B1212010001). Wang's work is supported by the Provincial Natural Science Research Project of Anhui Colleges (2022AH050067).

\small
\bibliographystyle{apalike}
\bibliography{reference}

\end{document}